\pdfoutput=1 
\documentclass[11pt,a4paper]{article}

\voffset=-1.5cm \hoffset=-1.4cm \textwidth=16cm \textheight=22.0cm

\usepackage{graphicx}
\usepackage{amsmath}
\usepackage{amsfonts}
\usepackage{amssymb}
\usepackage{enumerate}
\usepackage{lscape,hyperref}
\usepackage{longtable}
\usepackage{rotating}
\usepackage{color}
\usepackage{url}
\usepackage{subfigure}
\usepackage{rotating}
\usepackage[title,titletoc,toc]{appendix}
\usepackage{mathtools}
\newtheorem{theorem}{Theorem}[section]

\usepackage[ruled,vlined,noline,linesnumbered]{algorithm2e}
\usepackage{algorithmic}
\usepackage{textcomp}
\usepackage[dvipsnames,svgnames]{xcolor}
\usepackage[normalem]{ulem}
\usepackage{subfigure}
\usepackage{tikz}
\usetikzlibrary{shapes,arrows}

\newtheorem{definition}{Definition}[section]

\newtheorem{lemma}{Lemma}[section]

\newenvironment{proof}[1][Proof]{\textbf{#1.} }{\ \rule{0.5em}{0.5em} \vspace{1ex}}

\SetCommentSty{mycommfont}
\setlength{\unitlength}{1mm}

\newcommand{\esp}{\mathbb{E}}

\def\real{\mathbb{R}}

\def\doe{DoE}

\DeclareMathOperator{\EI}{EI}
\DeclareMathOperator{\argmax}{argmax}

\DeclareMathOperator{\glb}{global}
\DeclareMathOperator{\lcl}{local}

\newcommand{\cov}{\operatorname{cov}}

 \def\emailname{E-mail}%
\def\email#1{\emailname: #1}
 %
  %
%remove when submitting:
%\usepackage{showlabels}

\begin{document}

\title{TREGO: a Trust-Region Framework for Efficient Global Optimization}

\author{
Y. Diouane\thanks{Department of Mathematics and Industrial Engineering, Polytechnique Montr\'eal. \email{{\tt youssef.diouane@polymtl.ca}}
 %                 \href{mailto:youssef.diouane@isae-supaero.fr}{{\tt youssef.diouane@isae-supaero.fr}}
}
\and
V. Picheny \thanks{Secondmind, 72 Hills Road, Cambridge, CB2 1LA, UK.
  \email{{\tt victor@secondmind.ai}}
%  \href{mailto:victor@secondmind.ai}{{\tt victor@secondmind.ai}}
  }
\and
R. Le Riche \thanks{CNRS LIMOS, Mines St-Etienne and UCA, France.   \email{{\tt leriche@emse.fr}}}
  \and
  A. Scotto Di Perrotolo\thanks{ISAE-SUPAERO, Universit\'e de Toulouse, France.
              \email{{\tt alexandre.scotto-di-perrotolo@isae-supaero.fr}}
     %               \href{mailto: alexandre.scotto-di-perrotolo@isae-supaero.fr}{{\tt  alexandre.scotto-di-perrotolo@isae-supaero.fr}}         
}
}

%\date{\today}
\maketitle
\footnotesep=0.4cm
%{\small
\begin{abstract}
Efficient Global Optimization (EGO) is the canonical form of Bayesian optimization that has been successfully applied to solve global optimization of expensive-to-evaluate black-box problems. However, EGO struggles to scale with dimension, and offers limited theoretical guarantees.
In this work, {a trust-region framework for EGO (TREGO) is proposed and analyzed}. TREGO alternates between regular EGO steps and local steps within a trust region.
By following a classical scheme for the trust region (based on a sufficient decrease condition), {the proposed algorithm enjoys global convergence properties},
while departing from EGO only for a subset of optimization steps.
Using extensive numerical experiments based on the well-known COCO {bound constrained problems}, we first analyze the sensitivity of TREGO to its own parameters, 
then show that the resulting algorithm is consistently outperforming EGO and getting competitive with other state-of-the-art { black-box optimization} methods.
\end{abstract}

\bigskip

\begin{center}
\textbf{Keywords:}
non-linear optimization; { black-box optimization}; Gaussian processes; Bayesian optimization;  trust-region.
\end{center}

\section{Introduction}

In the past 20 years, Bayesian optimization (BO) has encountered great successes and a growing popularity for solving global optimization problems with expensive-to-evaluate black-box functions. Examples range from aircraft design~\cite{forrester2007multi} to automatic  machine learning~\cite{snoek2012practical} to crop selection~\cite{picheny2017using}. In a nutshell, BO leverages non-parametric (Gaussian) processes (GPs) to provide flexible surrogate models of the objective. Sequential sampling decisions are based on the GPs, judiciously balancing exploration and exploitation in search for global optima (see~\cite{DRJones_MSchonlau_WJWelch_1998,mockus2012bayesian} for early works or~\cite{brochu2010tutorial} for a recent review).

BO typically tackles problems of the form:
\begin{eqnarray}
	\label{prob:1}
	\displaystyle \min_{x \in \Omega} f(x), 
\end{eqnarray}
 where $f$ is a pointwise observable objective function defined over a continuous set $\Omega { \subseteq} \real^n$, with $n$ relatively small (say, 2 to 20). In this work, {the objective function $f: \real^n \rightarrow \mathbb{R}$ is assumed} observable exactly (i.e., without {random} noise), bounded from below in $\real^n$ and Lipschitz continuous near appropriate limit points. 
{The constraints set $\Omega$ will be treated as explicit~\cite[i.e. not relying on estimates, as in][]{schonlau1998global} and non-relaxable~\cite{Led_Wild_2015}, meaning that the objective function cannot be evaluated outside the feasible region. In our numerical experiments, $\Omega$ will be set as a bound constraints set.}

Despite its popularity and successes, BO suffers from a couple of important drawbacks. 
First, it is very sensitive to the curse of dimensionality, as with growing dimension exploration tends to overcome exploitation 
and learning an accurate model throughout the search volume is typically not feasible within a limited number of function evaluations. 
Several recent works have tackled this problem, either making strong structural assumptions~\cite{bouhlel18,kandasamy2015high,wang2016bayesian} or 
incentivizing sampling away from the boundaries~\cite{oh2018bock,siivola2018correcting}.
% Second, the GP fitting step of BO is known to be challenging, which often leads to a lack of robustness~\cite{benassi2011robust}. 
% Victor : removed because I found the argument weakly supported
% Finally, 
Second, the theoretical properties for BO are rather limited, in particular in the noiseless context. 
For BO algorithms based on the expected improvement acquisition function, Vazquez and Bect~\cite{EVasquez_JBect_2010} showed that the sequence of evaluation points is dense in the search domain providing some strong assumptions on the objective function. 
% Although not practical, these results were among the first steps toward a deeper understanding of BO algorithms that are based on the EI criterion. Later,
Bull~\cite{bull2011convergence} built upon this result {to provide a convergence rate for EGO when GP models with a Mat\'ern kernel are used. 
However, the proposed convergence rate requires the addition of a well-calibrated epsilon-greedy strategy to EGO and it is valid for a limited family of objective functions.}

% They further showed that the combination of EI with a well calibrated epsilon?greedy strategy results in near optimal simple regret.
%  provided convergence rates for the same procedure for objective functions with low smoothness. 
% Bull suggested also a modified expected improvement based algorithm attaining optimal rates for smoother objective functions.

Over the past two decades, there has been a growing interest in deterministic Derivative-Free Optimization (DFO)~\cite{CAudet_WHare_2017,ARConn_KScheinberg_LNVicente_2009}. 
DFO methods either try to build local models of
the objective function based on samples of the function values, e.g. trust-region methods, or directly exploit a sample set of function evaluations without building an explicit
model, e.g. direct-search methods.  
Motivated by the large number of DFO applications, researchers and practitioners have made significant progress on the algorithmic and theoretical aspects (in particular, proofs of global convergence) of the DFO methods. 
% The most important progress concerns the recent algorithms and proofs of global convergence. 

%The books~\cite{ARConn_KScheinberg_LNVicente_2009,CAudet_WHare_2017} give a very good review of the state of the art of DFO with a detailed description of the theoretical background to ensure convergence. 

In this paper, we propose to equip a classical BO method with known techniques from deterministic DFO using a trust-region scheme, and a sufficient decrease condition to accept new iterates~\cite{TGKolda_RMLewis_VTorczon_2003}. This is in line with recent propositions hybridizing BO and DFO~\cite{Turbo_2019,Trike_2016} that showed great promise empirically, but with limited theoretical guarantees. { The proposed} TREGO algorithm (Trust-Region framework for Efficient Global Optimization) benefits from both worlds: 
{TREGO rigorously achieves} global convergence under reasonable assumptions, 
while enjoying the flexible predictors and efficient exploration-exploitation trade-off provided by the GPs. Contrary to the aforementioned propositions, TREGO maintains a global search step, ensuring that the algorithm can escape local optima and maintain the asymptotic properties of BO~\cite{bull2011convergence,EVasquez_JBect_2010}.

The remainder of this article is organized as follows. Section \ref{sec:EGO} presents the classical BO framework. Section \ref{sec:TREGO} describes our hybrid algorithm, and Section \ref{sec:analysis} its convergence properties. 
Intensive numerical experiments have been carried out using the COCO test bed~\cite{hansen2021coco}. They represent months of CPU time and have allowed to study TREGO and compare it with state-of-the-art alternatives. These experiments are reported in Section \ref{sec:experiments}.
% we report numerical experiments, including an ablation study and a broad comparison with other algorithms using the COCO test bed~\cite{hansen2021coco}. 
Conclusions and perspectives are finally provided in Section~\ref{sec:conc}.  
{ By default this paper uses $\ell_2$ norms.}

\section{The Efficient Global Optimization Framework}
\label{sec:EGO}

Efficient Global Optimization (EGO)~\cite{DRJones_MSchonlau_WJWelch_1998} is a class 
 of BO methods relying on two key ingredients: (i) the construction of a GP surrogate model of the objective function and (ii) the use of an  acquisition function.
 EGO proceeds along the following steps:
\begin{enumerate}
 \item an initial set of evaluations (often referred to as Design of Experiment, $\doe$) of the objective function is obtained, typically using a space-filling design~\cite{fang2005design};
 \item a GP surrogate model is trained on this data;
 \item a fast-to-evaluate acquisition function, defined with the GP model, is maximized over $\Omega$;
 \item the objective function is evaluated at the acquisition maximizer;
 \item this new observation is added to the training set and the model is re-trained;
 \item {S}teps 3 to 5 are repeated until convergence or budget exhaustion.
\end{enumerate}
{
The surrogate model is built by assuming that $f$ is a realization of a Gaussian process (GP) 
$(Y_x)_{x \in \Omega} \sim \mathcal{GP}\left(m, c \right)$, 
with prior mean function  
$m(x):= \esp(Y_x)$ and  
% $m(x): \Omega \to \real$ and
covariance function
$c(x, x'):= \cov(Y_x, Y_{x'})$, $x, x' \in \Omega$.
Given a DoE of size $t\in \mathbb{N}^*$, i.e., 
$\mathcal{D}_t = \{x_1,x_2, \ldots, x_t \}$ and $\mathcal{Y}_t = \{f(x_1),f(x_2), \ldots, f(x_t ) \}$, 
the posterior distribution of the process conditioned by $\mathcal{D}_t,\mathcal{Y}_t$ is Gaussian with mean and covariance given by~\cite{rasmussen2006gaussian}:
%  $c: \Omega \times \Omega \to \real$.
% The surrogate model is built by putting a Gaussian process (GP) prior on the objectives.  We consider that the surrogate model $Y(.)$ can be written as the sum of a known deterministic trend function $m(.)$ and of a function $Z(.)$, a centered Gaussian process , i.e., for a given $x\in \Omega$,
% \begin{eqnarray}\label{eq:Y}Y(x) &=& m(x)+ Z(x) \end{eqnarray}
% where $Z$'s covariance kernel $c: (x,x') \in \Omega^2 \to c(x,x')\in \real$ is known. Given a DoE of size $t\in \mathbb{N}^*$, i.e., 
% $\mathcal{D}_t = \{x_1,x_2, \ldots, x_t \}$ and $\mathcal{Y}_t = \{f(x_1),f(x_2), \ldots, f(x_t ) \}$, we consider a surrogate model of the form $Y_{t}(.):= \mathcal{GP} \left( {m}_t(.),{c}_t\left(.,.\right)\right)$. $Y_{t}(.)$ is constructed as the best linear unbiased prediction~\cite{rasmussen2006gaussian} of the random function $Y(.)$ based on the DoE $\mathcal{D}_t$, i.e.,
\begin{eqnarray*}
m_t(x) & :=& m(x) + \lambda_t(x) \left(Y_t - M_t \right),  \\
c_t(x, x') & :=&  c(x, x') - \lambda_t(x)c_t(x'),
\end{eqnarray*}
where $\lambda_t(x) := c_t(x)^\top C_t^{-1}$, $c_t(x) := (c(x, x_1), c(x, x_2), \dots, c(x, x_t ))^\top$, $C_t := (c(x_i, x_j) )_{1 \leq i,j \leq t}$,
$Y_t:= (f(x_1),f(x_2), \ldots, f(x_t ))^\top$ and $M_t:=(m(x_1),m(x_2), \ldots, m(x_t ))^\top$.
}
%  + \tau_i^2 \delta_{i=j}, $\delta$ standing for the Kronecker function.

Typically, $m$ is taken as constant or a polynomial of small degree and $c$ belongs to a family of covariance functions such
as the Gaussian and Mat\'ern kernels, based on hypotheses about the smoothness
of {$f$}. Corresponding hyperparameters are often obtained as maximum likelihood
estimates; see for example~\cite{rasmussen2006gaussian,stein2012interpolation} for the corresponding details.

Once the surrogate model is built, an acquisition function is used to determine which point is most likely to enrich efficiently the model regarding the search for a global minimizer of the objective function $f$. 
{The expression of the acquisition function only depends on} the probabilistic surrogate model and usually integrates a trade-off between exploitation (i.e., low {$m_t(x)$}) and exploration (i.e., high $c_t(x,x)$)~\cite{frazier2018tutorial}.
In the noise-free setting, the canonical acquisition is Expected Improvement (EI)~\cite{DRJones_MSchonlau_WJWelch_1998}, { i.e.,
\begin{eqnarray*}
 \EI_t(x) &:=& (f_{\min} - m_t(x))\Phi\left( \frac{f_{\min} -m_t(x)}{\sqrt{c_t(x,x)} } \right) + \sqrt{c_t(x,x)} \phi\left( \frac{f_{\min} - m_t(x)}{\sqrt{c_t(x,x)} } \right),
\end{eqnarray*}
where $f_{\min} = \min_{1 \leq i \leq t}(f(x_i))$. The functions $\phi$ and $\Phi$ denote the probability and cumulative density functions, respectively, of the standard normal variable.
}
Note that many alternative acquisition functions have been proposed over the past 20 years, see for example~\cite{shahriari2015taking} for a recent review. We stress that while {the focus here is} on EI for simplicity, {the proposed} framework described later is not limited to EI and other acquisitions can be used instead (see Section \ref{sec:analysis} for suitable choices).

Given $\mathcal{D}_t$ the set of observations available at iteration $k$, the next optimization iterate $x_{k+1}$ is given by 
\begin{equation}
	x_{k+1}^{\glb} \in \underset{x \in \Omega}{\argmax} ~ \alpha(x; \mathcal{D}_t).\label{eq:egostep}
\end{equation}
where $\alpha$ corresponds to the chosen acquisition function at iteration $k$ (for EGO, $ \alpha(x; \mathcal{D}_t)~=~\EI_t(x)$).

For most existing implementations of EGO, the stopping criterion relies typically on a 
maximum number of function evaluations. 
In fact, unlike gradient-based methods where
 the gradient's norm can be used as a relevant stopping criterion which ensures a first-order stationarity, derivative-free optimization algorithms have to cope with a lack of general stopping criterion and the EGO algorithm makes no exception.

\section{A Trust-Region Framework for EGO (TREGO)}\label{sec:TREGO}

 In this section, we propose a modified version of EGO where {a control parameter is included} (which depends on the decrease of the true objective function) to ensure some form of global convergence without jeopardizing the performance of the algorithm.

\subsection{The TREGO algorithm}\label{sec:tregopresentation}
 
Our methodology follows the lines of the search/poll direct-search methods~\cite{AJBooker_etal_1998, ARConn_KScheinberg_LNVicente_2009,YDiouane_SGratton_LNVicente_2015_b, AIFVaz_LNVicente_2007}.
In the context of EGO, this results in a scheme alternating between \textit{local} and \textit{global} phases. The global phase corresponds to running {one iteration} of the classical EGO algorithm over the whole design space as in Eq. \ref{eq:egostep}. 
This phase ensures an efficient global exploration and aims at identifying the neighborhood of a global minimizer. 
The local phase corresponds to running one iteration of EGO, but restricting the search to the vicinity of the current best point ({ the trust-region} $\Omega_k$, detailed hereafter), so that
\begin{equation}
    x^{\lcl}_{k+1} \in \underset{x \in \Omega_k}{\argmax} ~ \alpha(x; \mathcal{D}_t).
\end{equation}
Associated with a proper management of { the trust-region}  $\Omega_k$, this phase ensures that the algorithm converges to a stationary point.
All the trial points, whether coming from the global or from the local phase, are included in the $\doe$ to refine the surrogate model of the objective function $f$.

By default, only the global phase is used. The local one is activated when the global phase is not successful, that is when it fails to sufficiently reduce the best objective function value.
In addition, the local phase consists of a fixed number of steps (typically only one), after which the algorithm reverts to the global phase.
Consequently, the original EGO algorithm is entirely maintained over a subset of steps.

The local phase management follows two widely used techniques in the field of nonlinear optimization with and without derivatives.
First, some form of \emph{sufficient decrease condition} {is imposed} on the objective function values to declare an iteration successful. Second, we control the size of the steps taken at each iteration using a parameter $\sigma_k$ that is updated depending on the sufficient decrease condition (increased if successful, decreased otherwise). 
Given a current best point $x_k^*$, at iteration $k$, a { trust-region around $x_k^*$} is defined as
\begin{equation}
 \Omega_k := \{x \in \Omega\; {:} \;  d_{\min} \sigma_k \le \| x - x_k^* \| \le d_{\max} \sigma_k\},\label{eq:trbounds}
\end{equation}
where $d_{\min} < d_{\max}$ are any two strictly positive real values.
The inclusion in the algorithm of the bounds $d_{\min}$ and $d_{\max}$ on the definition of $\Omega_k$ is essential to our convergence analysis. 
In practice, the constant $d_{\min}$ can be chosen very small and the upper bound $d_{\max}$ can be set to a very large number. {Note that the definition of the trust-region as given in \eqref{eq:trbounds} uses the $\ell_2$ norm, however other norms can be preferred depending on the nature of the constraints set $\Omega$. For instance, if $\Omega$ contains only bound constraints, it is more practical to use the $\ell_1$ norm as we will do in our experiments.}

At each iteration of the local phase, the following sufficient decrease condition on the objective function {is imposed}:
\begin{equation}
	\label{eq:suff_decr_cond}
	f(x^{\lcl}_{k+1}) \leq f(x^*_k ) - \rho (\sigma_k),
\end{equation}
where $\rho : \real^+ \rightarrow \real^+$ is a forcing function~\cite{TGKolda_RMLewis_VTorczon_2003}, i.e., a positive continuous nondecreasing function such that $\rho(\sigma)/\sigma \rightarrow 0$ when $\sigma \downarrow 0$ (for instance, $\rho(\sigma)=\sigma^2$).
The step size parameter $\sigma_k$ is increased if the iteration is
successful, {i.e.,  $\sigma_{k+1} = \gamma  \sigma_k$ with $\gamma \in (1, +\infty)$. An iteration is declared 
successful if  the new iterate $x^{*}_{k+1}$ decreases sufficiently the objective function. In this case, the iterate $x^{*}_{k+1}$ can be updated either within the global phase ($x^{*}_{k+1}=x^{\glb}_{k+1}$) or the local one ($x^{*}_{k+1}=x^{\lcl}_{k+1}$). 
If the sufficient decrease condition \eqref{eq:suff_decr_cond} is not satisfied, the current iterate is kept unchanged ($x^{*}_{k+1}=x^{*}_{k}$) and the step size is reduced, } $\sigma_{k+1} = \beta  \sigma_k$ with $\beta \in (0,1)$. 
A classical scheme is to keep $\beta \in (0, 1)$ constant, and apply:
\begin{eqnarray}
 \sigma_{k+1} &=& \frac{{\sigma_{k}}}{\beta} \quad \text{ if the iteration is successful,}\nonumber\\
 \sigma_{k+1} &=& \beta {\sigma_{k}} \quad \text{ otherwise.}\label{eq:decrease_cond}
\end{eqnarray}

Figure \ref{fig:overview:TREGO} is a schematic illustration of the algorithm. The pseudo-code of the full algorithm is given in Appendix \ref{apendix:A}.

%\begin{figure}[!ht]
%	\centering
%	\includegraphics*[width=14cm]{Global_Local_EGO_Overview.eps}
%	\caption{An overview of the TREGO framework (detailed in Algorithm \ref{alg:TREGO}).
%\rodo{Starting $\rightarrow$ Start, enlever ``The'' devant Global et Local phase, Return $\rightarrow$ return. Ca serait plus pro en tikz} \ydnote{J'ai corrige avec Inkscape. Je suis d'accord  Tikz donnera de meilleur resultat, mais je ne l'ai jamais utilise. J'ai toujours utilise inkscape.}
%}
%	\label{fig:overview:TREGO}
%\end{figure}

\begin{figure}[!ht]
\tikzstyle{block} = [rectangle, draw, text centered, rounded corners, minimum height=4em]
\tikzstyle{Endblock} = [rectangle, draw, text centered, minimum height=4em]
\tikzstyle{decision} = [diamond, minimum width=1cm, minimum height=1cm, text centered, draw=black]
\tikzstyle{arrow} = [->,thick]
\begin{center}
%\begin{tikzpicture}[node distance = 2cm,auto]
\begin{tikzpicture}
\node [block] (global) {\begin{minipage}{4cm} \centering \textbf{Global phase over $\Omega$}\\ (Update the DoE) \end{minipage}};
\node [rectangle, left of=global, node distance = 6cm] (origin) {};  
\draw[arrow] (origin) -- node [above] {\begin{tabular}{c}\textbf{Start from $x_0^*$} \\ $k=0$\end{tabular}} (global);
\node [block, below of=global, node distance = 2.5cm] (local) {\begin{minipage}{5cm} \centering \textbf{Local phase \\ over {the trust-region $\Omega_k$}}\\ (Update the DoE) \end{minipage}};
\draw [arrow] (global) -- node [right] {\textbf{Failure}} (local);
\node [block, below of=local, node distance = 2.5cm,xshift=-2cm] (localSuc) {\begin{minipage}{3cm} \centering {$\sigma_{k+1}= \gamma \sigma_k$} \\ Update $x^*_{k+1}$ \end{minipage}};
\node [block, right of=localSuc, anchor=west, node distance = 2.cm] (localFail) {\begin{minipage}{3cm} \centering $\sigma_{k+1}=\beta \sigma_k$\\ $x^*_{k+1}=x^*_k$ \end{minipage}};
%\draw [arrow] ([xshift=-1.5cm]local.south) -- node [left] {\textbf{Success}} (localSuc);
\draw [arrow] ([xshift=-1.5cm]local.south) to [out=-90,in=90] node [left] {\textbf{Success}\mbox{~}} node [right] {$x^\text{local}_{k+1}$} (localSuc);
\draw [arrow] ([xshift=1.1cm]local.south) to [out=-90,in=90] node [right] {\textbf{~Failure}} (localFail);
\node [decision, right of=global, below of=global, anchor=west, text width=2.5cm, align=center, inner sep = -0.5ex,xshift=4.cm,yshift=-1.2cm] (stop) {\textbf{A stopping condition is satisfied}};
\node [Endblock, below of=stop, node distance = 3cm ] (return) {\begin{minipage}{3.5cm} \centering \textbf{Stop and return current iterate as solution} \end{minipage}};
\draw[arrow] ([yshift=-0.5cm]global) -| ([xshift=-0.5cm]localSuc.west)  node [near start, below] {$x^\text{global}_{k+1}$} -- (localSuc.west) ;
\coordinate [right of=localFail, node distance=2.5cm] (joinNode) {};
\draw [thick] (localSuc.south) |- ([yshift=-0.5cm]localSuc.south) -| (joinNode); 
\draw [arrow] (localFail.east) -- (joinNode) |- (stop.west) node[above, near start, rotate=90] {\textbf{Increment} $k$};
\draw [arrow] (stop.north) |- (global) node [above,midway] {\textbf{No}};
\draw [arrow] (stop.south) -- (return.north) node [right,near start] {\textbf{Yes}};
\end{tikzpicture}
\end{center}
	\caption{An overview of the TREGO framework (detailed in Algorithm \ref{alg:TREGO}). 
	\label{fig:overview:TREGO} }
\end{figure}
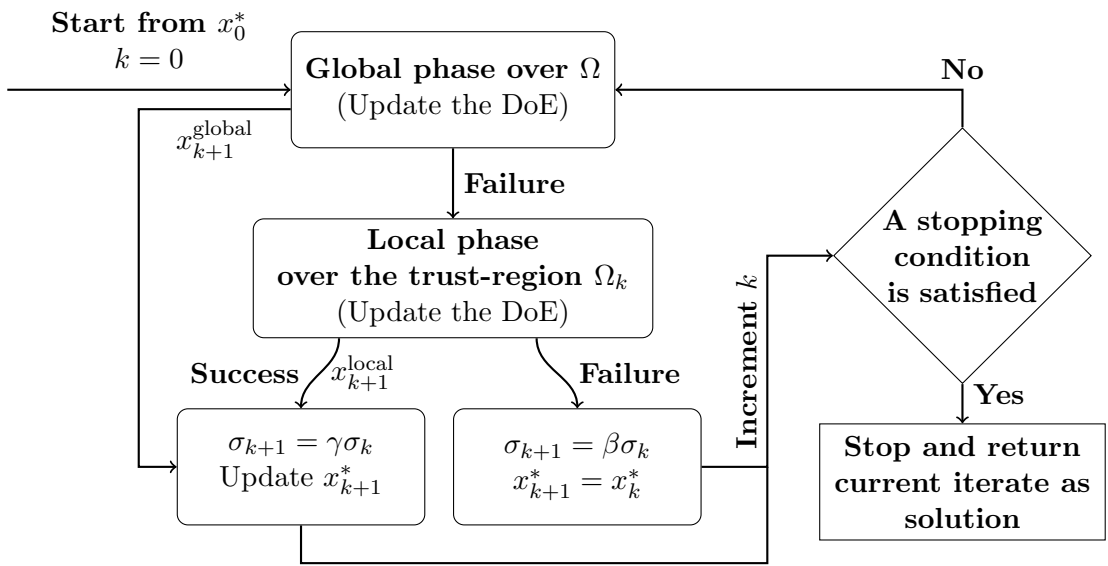

\subsection{Extensions}\label{sec:extensions}
We now present several possible extensions to TREGO. Some of these extensions are tested in the ablation study of Section \ref{sec:sensitivity}.

\paragraph{Local / global ratio:} in the previous section, a single local step is performed when the global step fails. 
The local/global ratio can easily be controlled by forcing several consecutive steps of either the global or the local phase. 
For example, a ``\texttt{gl3-5}'' (see algorithms names later) tuning would first perform three global steps regardless of their success. 
If the last step fails, it then performs five local steps. 
Such modification will not alter the structure of the algorithm. Moreover, since the convergence analysis relies on a subsequence of unsuccessful iterations, the validity of the convergence analysis (see Section \ref{sec:analysis}) is not called into question. 
In fact, during the local phase, we keep using the same sufficient decrease condition to decide whether the current iteration is successful or not. 

\paragraph{Local acquisition function:} our analysis (see Section \ref{sec:analysis}) does not require using the same acquisition for the global and local steps. For example, as EI tends to become numerically unstable in the vicinity of a cluster of observations, it might be beneficial to use the GP mean or a lower confidence bound~\cite{srinivas2010gaussian} as an acquisition function for the local step.

\paragraph{Local model:} similarly, our approach does not require using a single model for the global and local steps. One could choose a local model that uses only the points inside the trust-region to allow a better fit locally, in particular for heterogeneously varying functions. 

\paragraph{Non BO local step} finally, our analysis holds when the algorithm employed for the local step is not Bayesian. For example, using BFGS would allow a more aggressive local search, which could prove beneficial~\cite{mcleod2018optimization}. In fact, as far as the condition \eqref{eq:suff_decr_cond} {is used} to decide whether the current iteration is successful or not, the convergence theory of the next section applies.

\subsection{Related work}\label{sec:relatedwork}
%In the context of BO, trust-region approaches have been used in the literature in different ways.

\paragraph{TRIKE~\cite{Trike_2016}} (Trust-Region Implementation in Kriging-based optimization with Expected improvement) implements a trust-region-like approach where each iterate is obtained by maximizing the expected improvement acquisition function within some trust region. The two major differences with TREGO are:
1) the criterion used to monitor the step size evolution is based on the ratio {of} the expected improvement and the actual improvement, rather than sufficient decrease;
2) TRIKE does not have a global phase.
In~\cite{Trike_2016}, TRIKE is associated with a restart strategy to ensure global search.

\paragraph{TURBO~\cite{Turbo_2019}} (a TrUst-Region BO solver) carries out a collection of simultaneous BO runs using independent GP surrogate models, each within {a} different trust region. 
% this allows in particular  heterogeneous modeling of the objective function and encourages exploration within a trust-region. 
The trust-region radius is updated with a failure/success mechanism based on the progress made on the objective function\footnote{Importantly, TURBO uses a simple decrease rule of the objective function, which turns to be insufficient to ensure convergence to a stationary point with GP models.}. 
At each iteration, a global phase (managed by an implicit multi-armed bandit strategy) allocates samples between these local areas and thus decides which local optimizations to continue. 
%Thus, while our proposed method allows using a global GP surrogate model to locate promising region, the TURBO method uses the multi-armed bandit strategy. 

Both TRIKE and TURBO display very promising performances, in particular when solving high dimensional optimization problems. 
However, both rely on several heuristics that hinder theoretical guarantees. In contrast, the use of the search/poll direct-search algorithmic design~\cite{AJBooker_etal_1998, YDiouane_SGratton_LNVicente_2015_b,ARConn_KScheinberg_LNVicente_2009, AIFVaz_LNVicente_2007} allows TREGO to benefit from global convergence properties.

\section{Convergence Analysis of TREGO}\label{sec:analysis}

Under appropriate assumptions, the global convergence of the proposed algorithm is now deduced.
 By global convergence, we mean the ability of a method to
generate a sequence of points converging to a stationary point regardless of the starting
DoE. 
A point is said to be stationary if it satisfies the {first-order} necessary conditions, in the sense that the gradient is equal to zero if the objective function is differentiable{. In the non-smooth case, the first-order necessary conditions mean that, for any direction $d$, the Clarke generalized  derivative~\cite{FHClarke_1990} along the direction $d$ is non-negative.}

%The sketch of the convergence analysis is as follows. First, we prove that there exists a subsequence $K$ of unsuccessful iterates driving the step size to zero (what is referred to as a refining subsequence in ~\cite{CAudet_JEDennis_2003}). Because of the sufficient decrease in the objective function and the control on the step-size $\sigma_k$, one can guarantee that the sequence $\{\sigma_k\}_k$ will converge to zero. Consequently, by assuming that all the iterates $\{ x^*_k\}$ produced by the TREGO lie in a compact set,  the existence of a convergent refining subsequence is ensured.  Our convergence analysis is concluded by showing that the limit point is a Clarke stationary point with respect to $f$.

{
In order to achieve our goal, the following additional assumption on the forcing function $\rho(\cdot)$ is made.
%\begin{assumption}
There exist constants $\bar \gamma$ and $\bar \beta$ satisfying $\bar \beta < 1 < \bar \gamma$, such that, for each $\sigma>0$, 
\begin{equation}
\rho(\beta \sigma) \ \le \ \bar \beta \rho(\sigma) ~~~~~\mbox{and} ~~~~~ \rho(\gamma \sigma) \ \le \ \bar \gamma \rho(\sigma).
\end{equation}
%\end{assumption}
Such assumption is not restrictive as it holds in particular for the classical forcing functions of the form $\rho(\sigma) =c \sigma^q$ with $c>0$ and $q\ge 1$. The next lemma shows that, as far as the objective function is bounded below, the $ \sum^{+\infty}_{k=0}  \rho(\sigma_k)$ is  bounded above. The proof of the lemma is inspired from what is done in DFO~\cite{Sto_Mads_2019,EBergou_YDiouane_VKungurtsev_CWRoyer_2018a,BlanchetCartisMenickellyScheinberg19} when handling stochastic noisy estimates of the objective function, e.g.,~\cite[Theorem 1]{Sto_Mads_2019}.

\begin{lemma} \label{thm:1}
Consider TREGO
without any stopping criterion. Let $f$ be bounded below by $f_{\mathrm{low}}$. Then, one has
\begin{equation*}
	  \sum^{+\infty}_{k=0}  \rho(\sigma_k) \le 2\left( f(x^*_0) - f_{\mathrm{low}} \right)+\frac{2(1-\bar \nu)}{\bar \nu} \rho(\sigma_0) < \infty,
\end{equation*}
where  $\bar \nu=\max \left\{ \frac{\bar \gamma  -1 }{\bar \gamma -\frac{1}{2}}, \frac{1 - \bar \beta }{\frac{3}{2} -\bar \beta } \right\}$. 
\end{lemma}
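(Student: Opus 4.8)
The plan is a Lyapunov-type argument, in the spirit of the stochastic trust-region analyses cited after the statement: build a merit function mixing the incumbent objective value with the step size, show it decreases by at least a fixed multiple of $\rho(\sigma_k)$ at every iteration, and then sum. Concretely, I would set
\[ \Phi_k \ := \ \bar\nu\bigl(f(x^*_k) - f_{\mathrm{low}}\bigr) + (1-\bar\nu)\,\rho(\sigma_k), \]
which is $\ge 0$ for all $k$ since $\bar\nu\in(0,1)$, $f(x^*_k)\ge f_{\mathrm{low}}$ and $\rho\ge 0$.

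The core step is to prove $\Phi_{k+1}\le \Phi_k - \tfrac{\bar\nu}{2}\,\rho(\sigma_k)$ for every $k$, by splitting on the outcome of iteration $k$; the argument is insensitive to whether the accepted point came from the global or the local phase, because only the acceptance test~\eqref{eq:suff_decr_cond} and the step-size rule ($\sigma_{k+1}=\gamma\sigma_k$ on success; $\sigma_{k+1}=\beta\sigma_k$ and $x^*_{k+1}=x^*_k$ on failure) enter. On a \emph{successful} iteration one has $f(x^*_{k+1})\le f(x^*_k)-\rho(\sigma_k)$ and $\sigma_{k+1}=\gamma\sigma_k$; substituting and using $\rho(\gamma\sigma_k)\le\bar\gamma\,\rho(\sigma_k)$ gives $\Phi_{k+1}\le \Phi_k-\bigl[1-(1-\bar\nu)\bar\gamma\bigr]\rho(\sigma_k)$, and the bracket is $\ge \bar\nu/2$ precisely because $\bar\nu\ge(\bar\gamma-1)/(\bar\gamma-\tfrac12)$. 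On an \emph{unsuccessful} iteration one has $f(x^*_{k+1})=f(x^*_k)$ and $\sigma_{k+1}=\beta\sigma_k$; using $\rho(\beta\sigma_k)\le\bar\beta\,\rho(\sigma_k)$ gives $\Phi_{k+1}\le\Phi_k-(1-\bar\nu)(1-\bar\beta)\,\rho(\sigma_k)$, and the coefficient is $\ge\bar\nu/2$ precisely because $\bar\nu\le(1-\bar\beta)/(\tfrac32-\bar\beta)$. The value $\bar\nu=\max\{(\bar\gamma-1)/(\bar\gamma-\tfrac12),\,(1-\bar\beta)/(\tfrac32-\bar\beta)\}$ of the statement is exactly the choice that makes both case estimates hold at once.

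It then remains to telescope: summing the one-step decrease over $k=0,\dots,N$ and using $\Phi_{N+1}\ge 0$ gives $\tfrac{\bar\nu}{2}\sum_{k=0}^{N}\rho(\sigma_k)\le \Phi_0-\Phi_{N+1}\le\Phi_0$; letting $N\to\infty$ and inserting $\Phi_0=\bar\nu(f(x^*_0)-f_{\mathrm{low}})+(1-\bar\nu)\rho(\sigma_0)$ yields
\[ \sum_{k=0}^{+\infty}\rho(\sigma_k)\ \le\ \frac{2}{\bar\nu}\,\Phi_0\ =\ 2\bigl(f(x^*_0)-f_{\mathrm{low}}\bigr)+\frac{2(1-\bar\nu)}{\bar\nu}\,\rho(\sigma_0), \]
which is finite because $f$ is bounded below and $\sigma_0>0$ is a fixed constant.

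The main obstacle is the one-step estimate, and specifically the joint calibration of the single constant $\bar\nu$: a successful iteration helps through the objective term but hurts through the step-size term (inflated by a factor up to $\bar\gamma$), whereas an unsuccessful iteration only helps, through the step-size term (deflated by $\bar\beta$); forcing both net effects to dominate the same quantity $\tfrac{\bar\nu}{2}\rho(\sigma_k)$ is what produces the two competing bounds on $\bar\nu$ in the statement, and this is exactly where the extra forcing-function assumption $\rho(\beta\sigma)\le\bar\beta\rho(\sigma)$, $\rho(\gamma\sigma)\le\bar\gamma\rho(\sigma)$ enters essentially. Everything else --- the non-negativity of $\Phi_k$ and the telescoping --- is routine.
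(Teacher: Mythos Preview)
Your proposal is correct and follows essentially the same Lyapunov argument as the paper: the paper defines the identical merit function $\phi_k=\bar\nu(f(x^*_k)-f_{\mathrm{low}})+(1-\bar\nu)\rho(\sigma_k)$, establishes the same per-iteration decrease $\phi_{k+1}-\phi_k\le -\tfrac{\bar\nu}{2}\rho(\sigma_k)$ by the same two-case split (successful via sufficient decrease and $\rho(\gamma\sigma)\le\bar\gamma\rho(\sigma)$; unsuccessful via $\rho(\beta\sigma)\le\bar\beta\rho(\sigma)$), and then telescopes using $\phi_{N+1}\ge 0$. Your derivation of the two calibration constraints on $\bar\nu$ is in fact slightly cleaner than the paper's writeup.
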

\begin{proof}
For the sake of our proof, the following function is introduced
\begin{equation}
\phi_k \ 	:= \ \bar \nu (f(x^*_k)-f_{\mathrm{low}}) + (1 - \bar \nu) \rho(\sigma_k),
\label{eq:Lyapunov}
\end{equation}
where $\bar \nu=\max \left\{ \frac{\bar \gamma  -1 }{\bar \gamma -\frac{1}{2}}, \frac{1 - \bar \beta }{\frac{3}{2} -\bar \beta } \right\}$. Then, if an iteration $k$ is unsuccessful $x^*_{k+1} = x^*_k$ and $\sigma_{k+1} = \beta \sigma_k$, this leads to 
	\begin{equation} \label{eq:1:lyap}
		\phi_{k+1} - \phi_{k} = (1 - \bar\nu)(\rho(\sigma_{k+1})  - \rho(\sigma_k)) \le (1 - \bar\nu)(\bar \beta -1) \rho(\sigma_k) \le - \frac{\bar \nu}{2} \rho(\sigma_k),
	\end{equation}
	where we used $\rho(\beta \sigma_k) \le \bar \beta \rho(\sigma_k)$ and the fact that $\bar \nu \ge \frac{1 - \bar \beta }{\frac{3}{2} -\bar \beta }$.
	
	Otherwise, if the iteration $k$ is successful, then $x^*_{k+1}$ is changed and  $\sigma_{k+1} =  \gamma \sigma_k$. Then, by using the fact that $\rho(\gamma \sigma_k) \le \bar \gamma \rho(\sigma_k)$ and  $\bar \nu \ge \frac{\bar \gamma  -1 }{\bar \gamma -\frac{1}{2}}$, we obtain
	\begin{eqnarray} \label{eq:2:lyap}
		\phi_{k+1} - \phi_k &\leq& \bar \nu \rho(\sigma_k) + (1 - \bar \nu)(\bar \gamma -1) \rho(\sigma_k) \le- \frac{\bar \nu}{2} \rho(\sigma_k).
	\end{eqnarray}
Hence, from \eqref{eq:1:lyap} and \eqref{eq:2:lyap}, one deduces that for any iteration $k$, one gets
\begin{equation}
    \phi_{k+1} - \phi_{k} \ \le \ - \frac{\bar \nu}{2} \rho(\sigma_k).
\end{equation}
Thus, by applying the sum over the subscript $k$, one gets for a given iteration index $n$
\begin{equation*} 
\phi_{n+1} - \phi_0  = \sum^{n}_{k=0}  \phi_{k+1} - \phi_k =  \le - \frac{\bar \nu}{2}    \sum^{n}_{k=0}  \rho(\sigma_k).
\end{equation*}
Since $\phi_{n+1} \geq 0$, one deduces that by taking $n \to \infty$
\begin{equation*}
	  \sum^{+\infty}_{k=0}  \rho(\sigma_k) \le 2\left( f(x^*_0) - f_{\mathrm{low}} \right)\frac{2(1-\bar \nu)}{\bar \nu} \rho(\sigma_0) < \infty.
\end{equation*}
	\end{proof}

\iffalse
\begin{lemma} \label{liminf}
Consider TREGO
without any stopping criterion. Let $f$ be bounded below. Then $\liminf_{k \rightarrow +\infty} \sigma_k = 0$.
\end{lemma}
\begin{proof}
Suppose that there exists a $\sigma > 0$ such that $\sigma_k > \sigma$ for all $k$.

If there is an infinite number of successful iterations, this leads to a contradiction
to the fact that $f$ is bounded below.
In fact, since $\rho$ is a non-decreasing positive function, one has $\rho(\sigma_k) \geq \rho(\sigma) > 0$.
Hence, $f(x_{k+1}) \leq f(x^*_k ) - \rho( \sigma )$ for all~$k$, which obviously contradicts
the boundedness below of $f$. 

If no more successful iterations occur after a certain order, then this also leads to a contradiction as $\sigma_k$ cannot stay larger than $\sigma>0$. 
Thus, one must have a subsequence of iterations driving $\sigma_k$ to zero.
\end{proof}

From the fact that $\sigma_k$ is only reduced in unsuccessful iterations by a factor not
approaching zero, one can then conclude the following.
\fi 

From Lemma~\ref{thm:1}, we conclude that the full sequence $\{\rho(\sigma_k)\}$ must converge to zero. Thus, by using the properties of the forcing function $\rho(.)$ (i.e., a continuous and nondecreasing function), one deduces that 
$
\lim_{k\to +\infty} \sigma_k=0.
$ The result is stated in the next theorem.

\begin{theorem} \label{cor:1}
Consider TREGO
without any stopping criterion. If the objective function $f$ is bounded below, then one has
$$\lim_{k \rightarrow +\infty} \sigma_k = 0.$$
\end{theorem}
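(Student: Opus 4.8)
The plan is to derive $\lim_{k \to +\infty} \sigma_k = 0$ directly from Lemma~\ref{thm:1} together with the way $\sigma_k$ is updated in \eqref{eq:decrease_cond}. From the lemma, the series $\sum_{k=0}^{+\infty} \rho(\sigma_k)$ converges, hence its general term tends to zero: $\rho(\sigma_k) \to 0$ as $k \to +\infty$. The task is then to transfer this to $\sigma_k \to 0$ using only the stated properties of the forcing function $\rho$: continuity and nondecreasingness on $\real^+$, with $\rho$ positive and $\rho(\sigma)/\sigma \to 0$ as $\sigma \downarrow 0$.

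The main subtlety is that a nondecreasing positive continuous function need not be strictly increasing, so one cannot simply invert $\rho$. I would argue by contradiction: suppose $\sigma_k \not\to 0$. Then there is $\varepsilon > 0$ and a subsequence $\{\sigma_{k_j}\}$ with $\sigma_{k_j} \ge \varepsilon$ for all $j$. Since $\rho$ is nondecreasing, $\rho(\sigma_{k_j}) \ge \rho(\varepsilon)$ for all $j$. Now I must rule out $\rho(\varepsilon) = 0$: because $\rho$ is positive (i.e. $\rho(\sigma) > 0$ for $\sigma > 0$) we have $\rho(\varepsilon) > 0$, so $\rho(\sigma_{k_j}) \ge \rho(\varepsilon) > 0$ along the subsequence, contradicting $\rho(\sigma_k) \to 0$. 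Hence $\sigma_k \to 0$.

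Here the only real point to get right is the reading of ``forcing function'': the text says $\rho : \real^+ \to \real^+$ is ``a positive continuous nondecreasing function,'' which I interpret as $\rho(\sigma) > 0$ for every $\sigma > 0$ (the condition $\rho(\sigma)/\sigma \to 0$ as $\sigma \downarrow 0$ being a separate requirement, not needed here). With that reading, the argument above is complete and the update rule \eqref{eq:decrease_cond} is not even needed for this particular statement — only the boundedness below of $f$, already invoked in Lemma~\ref{thm:1}. If instead one wanted to avoid relying on positivity of $\rho$ away from zero, an alternative is to note that $\sigma_k$ can only decrease at unsuccessful iterations, each time by the fixed factor $\beta < 1$, and increases by the fixed factor $1/\beta$ (or $\gamma$) at successful ones; combined with $\rho(\sigma_k) \to 0$ and a contradiction argument on a subsequence bounded away from $0$, one again concludes. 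I would present the short positivity-based argument as the main line, since it matches the phrasing already used in the paragraph preceding the theorem statement.

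The step I expect to require the most care is simply making explicit why $\rho(\varepsilon) > 0$ for $\varepsilon > 0$, i.e. pinning down the definition of ``positive forcing function''; everything else is an immediate consequence of the convergence of $\sum \rho(\sigma_k)$ from Lemma~\ref{thm:1}.
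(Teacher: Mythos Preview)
Your proposal is correct and matches the paper's approach exactly: the paper's ``proof'' is the short paragraph immediately preceding the theorem, which says that $\sum_k \rho(\sigma_k) < \infty$ (from Lemma~\ref{thm:1}) forces $\rho(\sigma_k) \to 0$, and then invokes the properties of the forcing function to conclude $\sigma_k \to 0$. Your write-up actually supplies more detail than the paper does, in particular the contradiction argument using $\rho(\varepsilon) > 0$ for $\varepsilon > 0$, which the paper leaves implicit.
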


We now introduce the following definition (similar to those in~\cite{Sto_Mads_2019,CAudet_WHare_2017,CAudet_JEDennis_2003,CAudet_JEDennis_2006}) to show the
existence of convergent subsequences of TREGO iterates.

\begin{definition}\cite[Definition 5]{Sto_Mads_2019}
 A convergent subsequence $\{x^*_k \}_{k \in \mathcal{K}}$ of TREGO iterates (for some subset
of indices $\mathcal{K}$) is said to be a refining subsequence, if and only if $\{\sigma_k \}_{k \in \mathcal{K}}$ converges
to zero. The limit $x^*$ of $\{x^*_k \}_{k \in \mathcal{K}}$ is called a refined point.
\end{definition}

Assuming that TREGO is producing iterates that lie in a compact set, one can ensure the existence of a refining subsequence.

\begin{theorem} \label{liminf2}
Consider TREGO
without any stopping criterion. Let $f$ be bounded below. If the sequence $\{x^*_k\}$  lies in a compact set, then there exists a convergent refining subsequence  $\{x^*_k \}_{k \in \mathcal{K}}$. 
\end{theorem}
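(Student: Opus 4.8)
The plan is to combine the already-established fact that the full step-size sequence vanishes (Theorem~\ref{cor:1}) with a compactness argument. Since $f$ is bounded below, Theorem~\ref{cor:1} gives $\lim_{k\to+\infty}\sigma_k=0$. This is the key input; the rest is essentially extracting a convergent subsequence from a bounded sequence.

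First I would invoke the hypothesis that $\{x^*_k\}$ lies in a compact subset of $\real^n$. By the Bolzano--Weierstrass theorem, there exists a subset of indices $\mathcal{K}\subseteq\N$ such that the subsequence $\{x^*_k\}_{k\in\mathcal{K}}$ converges to some limit $x^*$. Next I would observe that, because $\{\sigma_k\}$ converges to zero along the whole sequence of iterations, its restriction $\{\sigma_k\}_{k\in\mathcal{K}}$ also converges to zero. Hence $\{x^*_k\}_{k\in\mathcal{K}}$ is a convergent subsequence of TREGO iterates for which the associated step sizes tend to zero, which is exactly the definition of a refining subsequence, with $x^*$ the corresponding refined point. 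This completes the argument.

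I do not expect any real obstacle here: the statement is a direct corollary of Theorem~\ref{cor:1} together with sequential compactness, and no further structural property of TREGO (successful versus unsuccessful iterations, the acquisition function, the trust-region geometry) is needed. The only point deserving a word of care is that one must take the convergent subsequence of $\{x^*_k\}$ \emph{after} knowing that the full sequence $\{\sigma_k\}$ vanishes, so that no additional thinning is required to also control $\sigma_k$ along $\mathcal{K}$; this is automatic once Theorem~\ref{cor:1} is in hand.
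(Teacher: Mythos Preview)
Your proposal is correct and matches the paper's intended argument: the paper states Theorem~\ref{liminf2} without proof, clearly treating it as an immediate consequence of Theorem~\ref{cor:1} (which guarantees $\sigma_k\to 0$ for the full sequence) combined with sequential compactness, exactly as you outline. There is nothing to add or correct.
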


The proposed convergence analysis will rely on iterates from the local phase. Thus, in what comes next, we will use $\{\hat x^{\lcl}_k\}_{k\in \mathcal{K}'} \subseteq \{ x^{*}_k\}_{k\in \mathcal{K}}$ (where $\mathcal{K}' \subseteq \mathcal{K}$ is an infinite subset of indices) to denote a refining subsequence associated with TREGO local phase iterates. The global convergence will be achieved by establishing that some type of
directional derivatives are non-negative at limit points of refining
subsequences along certain limit directions, known as refining directions (see~\cite{Sto_Mads_2019,CAudet_WHare_2017,CAudet_JEDennis_2003,CAudet_JEDennis_2006}).

\begin{definition}
Given a convergent refining subsequence (associated with the TREGO local phase) $\{\hat x^{\lcl}_k\}_{k\in \mathcal{K'}}$ and its corresponding
refined point $x^*$. Let $\{d_k\}_{k\in \mathcal{K'}}$ be a sequence such that,  $d_k := (x^{\lcl}_{k+1}-\hat x^{\lcl}_k )/\sigma_k$, for all $k\in \mathcal{K'}$. A direction $d$ is said to be a refining direction $x^*$ if and only if there exists
an infinite subset $\mathcal{L} \subseteq \mathcal{K}'$ such that $\lim_{k \in \mathcal{L}} d_k = d$. 
\end{definition}
Note that by construction, one has $d_{\min}\le d_k \le d_{\max}$, for all $k \in \mathcal{K}'$. Thus, the existence of a refining direction $d$ is justified as the sequence $\{d_k\}_{k\in \mathcal{K'}}$ lies in a compact set.
}

When $f$ is Lipschitz continuous near $x^*$, one can make use of the Clarke-Jahn generalized derivative
along a direction~$d$
\[
f^\circ(x^*;d) \; := \;
\limsup_{\begin{array}{c}x \rightarrow x^*, x\in\Omega\\t\downarrow0,x+td\in\Omega\end{array}}\frac{f(x+td)-f(x)}{t}.
\]
(Such a derivative is essentially the Clarke generalized directional derivative~\cite{FHClarke_1990},
adapted by Jahn~\cite{JJahn_1996} to the presence of constraints.)
However, for the proper definition of $f^\circ(x^*;d)$, one needs to guarantee that
$x+td\in\Omega$ for $x \in \Omega$ arbitrarily close to~$x^*$ which is
assured if $d$ is hypertangent to $\Omega$ at $x^*$. {In the following definition from~\cite{CAudet_JEDennis_2006,ARConn_KScheinberg_LNVicente_2009}, we will use
the notation
$B(x;\Delta) := \{ y \in \mathbb{R}^n: \| y-x \| < \Delta \}$ to denote the open ball of radius $\Delta$ centered at $x$.}

\begin{definition}{\cite[Definition 3.3]{CAudet_JEDennis_2006}}
A vector $d\in \real^n$ is said to be a hypertangent vector to the set $\Omega \subseteq \real^n$
at the point $x$ in $\Omega$ if there exists a scalar $\epsilon>0$ such that
\begin{equation*}
y+tw\in\Omega \quad\forall y\in\Omega\cap B(x;\epsilon),\quad w\in B(d;\epsilon) \quad\text{and}\quad 0<t<\epsilon.
\end{equation*}
\end{definition}

The hypertangent cone to $\Omega$ at $x$, denoted by~$T_\Omega^H(x)$, is the set of all hypertangent vectors
to $\Omega$ at $x$.
Then, the Clarke tangent cone to $\Omega$ at $x$ (denoted by~$T_\Omega(x)$)
can be defined as the closure of the hypertangent cone~$T_\Omega^H(x)$
(when the former is nonempty, an assumption we need to make for global convergence anyway).{
The Clarke tangent cone generalizes the notion of tangent cone in nonlinear programming~\cite{JNocedal_SJWright_2006}.  In the following  definition from~\cite{CAudet_WHare_2017,CAudet_JEDennis_2006,FHClarke_1990,ARConn_KScheinberg_LNVicente_2009}, we give the formal notion of the Clarke tangent cone.

\begin{definition}\cite[Definition 3.5]{CAudet_JEDennis_2006}
A vector $d\in \real^n$ is said to be a Clarke tangent vector to the set $\Omega \subseteq \real^n$
at the point $x$ in the closure of $\Omega$ if for every sequence $\{y_k\}$ of elements of
$\Omega$ that converges to $x$ and for every sequence of positive real numbers $\{t_k\}$
converging to zero, there exists a sequence of vectors $\{w_k\}$ converging to $d$ such
that $y_k+t_kw_k\in\Omega$, for a sufficiently large~$k$. The set $T_\Omega(x)$ of all Clarke tangent vectors to $\Omega$ at $x$ is called the Clarke tangent cone to $\Omega$ at $x$.
\end{definition}
}

{If we assume that~$f$ is Lipschitz continuous near~$x^*$ and by using~\cite[Propostion 3.5]{CAudet_JEDennis_2006},} for any a direction~$v$ in the {Clarke} tangent cone (possibly not in the hypertangent one), one can consider the
Clarke-Jahn generalized derivative to $\Omega$ at $x^*$ as the limit
\[
f^\circ(x^*;v) \; = \; \lim_{d \in T_\Omega^H(x^*), d \rightarrow v} f^\circ(x^*;d).
\]
A point~$x^* \in \Omega$ is considered Clarke stationary
if $f^\circ(x^*;d) \geq 0$, $\forall d \in T_\Omega(x^*)$.
Moreover, when $f$ is strictly differentiable at $x^*$, one has $f^\circ(x^*;d)= \nabla f(x^*)^\top d$. Hence in this case, if $x^*$ is a Clark stationary point is being equivalent to $ \nabla f(x^*)^\top d \geq 0$ , $\forall d \in T_\Omega(x^*)$.

{
It remains now to state the next lemma which will be useful for the
proof of the optimality result based on the Clarke derivative. The proof of this lemma is inspired from~\cite[Theorem 5]{Sto_Mads_2019}.

\begin{lemma}\label{lm:phi}
Consider TREGO
without any stopping criterion. Let $f$ be bounded below by $f_{\mathrm{low}}$. Then, one has
\begin{eqnarray*}
    \liminf_{k \to + \infty} \frac{f(\hat x^{\lcl}_k) - f(\hat x^{\lcl}_k + \sigma_k d_k)}{\sigma_k} & \le & 0.
\end{eqnarray*}
\end{lemma}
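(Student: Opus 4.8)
The plan is to turn the $\liminf$ into a quantity that is controlled by $\rho(\sigma_k)/\sigma_k$, and then invoke Theorem~\ref{cor:1}. I would first record the structural fact that the refining subsequence $\{\hat x^{\lcl}_k\}_{k\in\mathcal{K}'}$ associated with the local phase is extracted among the \emph{unsuccessful} iterations of TREGO (this set is infinite, since otherwise every iteration would eventually be successful and $\sigma_k$ would diverge, contradicting Theorem~\ref{cor:1}); by the structure of the algorithm, an iteration is unsuccessful exactly when the global step has failed — so the local phase has been run and has produced $x^{\lcl}_{k+1} = \hat x^{\lcl}_k + \sigma_k d_k$ — and the local step has itself failed the sufficient decrease test \eqref{eq:suff_decr_cond}. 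Hence, for every $k\in\mathcal{K}'$,
\[
f(\hat x^{\lcl}_k + \sigma_k d_k) \;>\; f(\hat x^{\lcl}_k) - \rho(\sigma_k),
\qquad\text{equivalently}\qquad
\frac{f(\hat x^{\lcl}_k) - f(\hat x^{\lcl}_k + \sigma_k d_k)}{\sigma_k} \;<\; \frac{\rho(\sigma_k)}{\sigma_k}.
\]

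Next I would pass to the limit along $\mathcal{K}'$. Theorem~\ref{cor:1} — whose proof rests on Lemma~\ref{thm:1} and on $f$ being bounded below by $f_{\mathrm{low}}$ — gives $\sigma_k\to 0$, and the forcing-function property $\rho(\sigma)/\sigma\to 0$ as $\sigma\downarrow 0$ then yields $\rho(\sigma_k)/\sigma_k\to 0$ as $k\to\infty$ in $\mathcal{K}'$. Combining this with the displayed bound gives
\[
\liminf_{k\in\mathcal{K}'}\frac{f(\hat x^{\lcl}_k) - f(\hat x^{\lcl}_k + \sigma_k d_k)}{\sigma_k}
\;\le\; \limsup_{k\in\mathcal{K}'}\frac{\rho(\sigma_k)}{\sigma_k} \;=\; 0,
\]
which is the assertion of the lemma.

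I do not expect a genuine obstacle here. The one point that must be made carefully is that, along $\mathcal{K}'$, the sufficient decrease condition \eqref{eq:suff_decr_cond} is \emph{violated} at the local trial point — this is exactly what makes the ratio dominated by $\rho(\sigma_k)/\sigma_k$; if successful local iterations were allowed into $\mathcal{K}'$, the inequality would point the wrong way. An equivalent presentation, closer in spirit to~\cite[Theorem~5]{Sto_Mads_2019}, argues by contradiction: if the $\liminf$ were some $2c>0$, then $f(\hat x^{\lcl}_k + \sigma_k d_k)\le f(\hat x^{\lcl}_k) - c\sigma_k$ for all large $k\in\mathcal{K}'$; since $\rho(\sigma_k)\le c\sigma_k$ eventually (again by $\sigma_k\to 0$ together with the forcing-function property), this would force \eqref{eq:suff_decr_cond} to hold at $x^{\lcl}_{k+1}$, contradicting that those iterations are unsuccessful. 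Either way, the argument is short once $\sigma_k\to 0$ is available.
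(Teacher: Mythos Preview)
Your argument is correct, but it proceeds along a different line from the paper's. You exploit directly that on unsuccessful iterations the sufficient-decrease test \eqref{eq:suff_decr_cond} has failed, which gives the pointwise bound on the ratio by $\rho(\sigma_k)/\sigma_k$; combined with $\sigma_k\to 0$ (Theorem~\ref{cor:1}) and the forcing-function property, this yields the $\liminf\le 0$ without any contradiction. The paper, by contrast, argues entirely by contradiction and does \emph{not} invoke the structural claim that the relevant indices are unsuccessful: assuming the ratio stays $\ge\epsilon$ for every $k$, it observes that eventually $\rho(\sigma_k)\le\epsilon\sigma_k$, whence \emph{every} late iteration is successful (either the global step already succeeded, or the local step now satisfies \eqref{eq:suff_decr_cond}); this forces $\sigma_{k+1}=\gamma\sigma_k$ for all large $k$, contradicting $\sigma_k\to 0$. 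Thus your contradiction variant at the end is close in spirit to the paper's, but the terminal contradiction differs: you contradict ``those iterations are unsuccessful'', whereas the paper contradicts $\sigma_k\to 0$ via the step-size dynamics. Your direct route is shorter and makes the mechanism transparent; the paper's route avoids committing to any hypothesis on $\mathcal{K}'$. One small caveat: your assertion that $\mathcal{K}'$ is extracted among the unsuccessful iterations is your reading of the paper's somewhat informal definition --- but since the lemma's $\liminf$ is over all $k$, it suffices that the unsuccessful iterations form an infinite set (which you do establish), so the conclusion stands regardless.
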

\begin{proof}
By contradiction, assume that there exists $\epsilon>0$ such that,
\begin{eqnarray}\label{lm:phi:eq:1}
\frac{f(\hat x^{\lcl}_k) - f(\hat x^{\lcl}_k + \sigma_k d_k)}{\sigma_k} & \ge & \epsilon,~~~~~~\mbox{for all $k \in \mathbb{N}$}.
\end{eqnarray}
From Theorem~\ref{cor:1}, one has $\lim_{k \to +\infty} \sigma_k =0$, hence by using the forcing function properties one has also $\lim_{k \to +\infty} \frac{\rho(\sigma_k)}{\sigma_k} =0$. This means that there exists $k_0>0$, such that 
\begin{eqnarray}\label{lm:phi:eq:2}
\rho(\sigma_k) & \le & \epsilon \sigma_k,~~~~~~\mbox{for all $k\ge k_0$}.
\end{eqnarray}
By combining \eqref{lm:phi:eq:1} and \eqref{lm:phi:eq:2}, one gets  
$$
f(\hat x^{\lcl}_k) - f(\hat x^{\lcl}_k + \sigma_k d_k) \ge \rho(\sigma_k), ~~~~~~\mbox{for all $k\ge k_0$}.
$$
Hence, for all $k\ge k_0$, the $k$-th iteration of TREGO is successful and $\sigma_{k+1}=\gamma \sigma_k$ (with $\gamma>1$). This contradicts $\lim_{k \to +\infty} \sigma_k =0$ and thus the claim \eqref{lm:phi:eq:1} is false.
\end{proof}
}

%to define the notion of refining direction (see~\cite{CAudet_JEDennis_2006}), associated with a convergent refining subsequence~$K$, as a limit point of $\{ d_k/\|d_k\| \}$ for all $k \in K$ sufficiently large such that $x^*_k + \sigma_k d_k \in \Omega$ where one has $d_k = (x^{\lcl}_{k+1}-x^*_k )\sigma_k^{-1}$.

{
The next theorem states the global convergence of TREGO. The obtained result is in the vein of those first established in~\cite[Theorem 3.2]{CAudet_JEDennis_2006}
for simple decrease and Lipschitz continuous functions and later generalized in~\cite{YDiouane_SGratton_LNVicente_2015,LNVicente_ALCustodio_2011} for sufficient decrease
and directionally Lipschitz functions. 

\begin{theorem}
Let the assumptions made in Theorem~\ref{thm:1} hold. Let $x^* \in \Omega$ be a refined point of a refining subsequence associated with the TREGO local phase $\{\hat x^{\lcl}_k\}_{k\in \mathcal{K}'}$. Assume that~$f$ is Lipschitz continuous near~$x^*$
and that~$T_\Omega^H(x^*) \neq \emptyset$. Let $d \in T_\Omega^H(x^*)$ be a refining direction associated with $\{ d_k \}_{k \in \mathcal{K}'}$. 
Then, the Clarke-Jahn generalized
derivative of $f$ at $x^*$ in the direction $d$ is nonnegative, i.e., $f^\circ(x^*;d) \geq 0$.%\\
%If the set of refining directions associated with $\{d_k \}_{k\in \mathcal{K}'}$ is dense in the unit sphere, then $x^*$ is a Clarke stationary point.
\end{theorem}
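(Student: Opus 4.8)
The plan is to carry out the classical direct-search argument (as in \cite[Theorem~3.2]{CAudet_JEDennis_2006} and its sufficient-decrease version \cite{LNVicente_ALCustodio_2011,YDiouane_SGratton_LNVicente_2015}): exhibit an admissible sequence for the $\limsup$ defining $f^\circ(x^*;d)$ along which the relevant difference quotient is asymptotically nonnegative. The tools are the sufficient-decrease control at the local iterations (equivalently Lemma~\ref{lm:phi}), the Lipschitz continuity of $f$ near $x^*$ (say with constant $L$), the hypertangency $d\in T_\Omega^H(x^*)$, and the compactness of the admissible directions guaranteed by $d_{\min}\le\|d_k\|\le d_{\max}$.

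First I would pass to a subsequence $\mathcal{L}\subseteq\mathcal{K}'$ along which $d_k\to d$ (such $\mathcal{L}$ exists since $d$ is a refining direction); along $\mathcal{L}$ one also has $\hat x^{\lcl}_k\to x^*$ and, by Theorem~\ref{cor:1}, $\sigma_k\to 0$. Taking $\mathcal{K}'$ to consist of unsuccessful local iterations, \eqref{eq:suff_decr_cond} fails at each $k\in\mathcal{K}'$, i.e.\ $f(\hat x^{\lcl}_k+\sigma_k d_k)=f(x^{\lcl}_{k+1})>f(\hat x^{\lcl}_k)-\rho(\sigma_k)$; dividing by $\sigma_k$ and using that $\rho$ is a forcing function with $\sigma_k\to 0$ (hence $\rho(\sigma_k)/\sigma_k\to 0$) gives
\[
\liminf_{k\in\mathcal{L}}\ \frac{f(\hat x^{\lcl}_k+\sigma_k d_k)-f(\hat x^{\lcl}_k)}{\sigma_k}\ \ge\ 0 .
\]
If one prefers not to restrict $\mathcal{K}'$ to unsuccessful iterations, Lemma~\ref{lm:phi} supplies the corresponding bound $\liminf\bigl(f(\hat x^{\lcl}_k)-f(\hat x^{\lcl}_k+\sigma_k d_k)\bigr)/\sigma_k\le 0$ over the full set of local iterations, from which one extracts a subsequence with limit $\le 0$ and, by compactness, a limiting direction.

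Next I would feed this into the Clarke--Jahn derivative. By construction $\hat x^{\lcl}_k+\sigma_k d_k=x^{\lcl}_{k+1}\in\Omega_k\subseteq\Omega$, and since $d\in T_\Omega^H(x^*)$ with $\hat x^{\lcl}_k\to x^*$ and $\sigma_k\to 0$, the hypertangency definition (center $x^*$, $y=\hat x^{\lcl}_k$, $w=d$, $t=\sigma_k$) gives $\hat x^{\lcl}_k+\sigma_k d\in\Omega$ for all large $k\in\mathcal{L}$. Thus $(\hat x^{\lcl}_k,\sigma_k)_{k\in\mathcal{L}}$ is admissible in the $\limsup$ defining $f^\circ(x^*;d)$, whence $f^\circ(x^*;d)\ge\limsup_{k\in\mathcal{L}}\bigl(f(\hat x^{\lcl}_k+\sigma_k d)-f(\hat x^{\lcl}_k)\bigr)/\sigma_k$. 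Finally, replacing $d$ by $d_k$ changes the quotient by at most $\bigl|f(\hat x^{\lcl}_k+\sigma_k d)-f(\hat x^{\lcl}_k+\sigma_k d_k)\bigr|/\sigma_k\le L\|d-d_k\|\to 0$ (Lipschitz continuity, both points near $x^*$ and in $\Omega$), so
\[
f^\circ(x^*;d)\ \ge\ \limsup_{k\in\mathcal{L}}\ \frac{f(\hat x^{\lcl}_k+\sigma_k d_k)-f(\hat x^{\lcl}_k)}{\sigma_k}\ \ge\ \liminf_{k\in\mathcal{L}}\ \frac{f(\hat x^{\lcl}_k+\sigma_k d_k)-f(\hat x^{\lcl}_k)}{\sigma_k}\ \ge\ 0 ,
\]
which is the assertion.

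The routine parts are the two elementary Lipschitz estimates and the $\liminf/\limsup$ bookkeeping. The step needing care is the admissibility in the Clarke--Jahn $\limsup$ --- verifying that the iterate/stepsize/direction data genuinely enter the definition of $f^\circ(x^*;d)$ --- which is exactly where the bounds $d_{\min}$ and $d_{\max}$ (so that $\{d_k\}$ stays in a compact annulus away from the origin, ensuring a refining direction exists and lies in the hypertangent cone) and the hypertangency of $d$ (feasibility of $\hat x^{\lcl}_k+\sigma_k d$) are used. A secondary point to watch is that the subsequence controlling the difference quotient must be compatible with $d_k\to d$; this is immediate when $\mathcal{K}'$ is a refining subsequence of unsuccessful local iterations, giving $f^\circ(x^*;d)\ge 0$ for every associated refining direction.
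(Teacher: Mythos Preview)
Your proof is correct and follows the same classical direct-search template as the paper's. The one technical difference worth noting is how you pass from the varying direction $d_k$ to the limit direction $d$. The paper invokes the continuity of the Clarke--Jahn generalized derivative in the direction variable (\cite[Proposition~3.9]{CAudet_JEDennis_2006}) to write $f^\circ(x^*;d)=\lim_{k\in\mathcal{L}} f^\circ(x^*;d_k)$ and then bounds each $f^\circ(x^*;d_k)$ from below by the corresponding difference quotient (using $\hat x^{\lcl}_k+\sigma_k d_k\in\Omega$ directly). You instead plug the sequence $(\hat x^{\lcl}_k,\sigma_k)$ with the \emph{fixed} direction $d$ into the $\limsup$ defining $f^\circ(x^*;d)$, use hypertangency of $d$ to guarantee $\hat x^{\lcl}_k+\sigma_k d\in\Omega$, and then correct by the elementary Lipschitz estimate $|f(\hat x^{\lcl}_k+\sigma_k d)-f(\hat x^{\lcl}_k+\sigma_k d_k)|/\sigma_k\le L\|d-d_k\|\to 0$. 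Your route is more self-contained (no external proposition needed) and makes explicit exactly where the hypertangency of $d$ enters; the paper's route is a little quicker once that continuity result is available. For the nonnegativity of the difference quotient, the paper relies solely on Lemma~\ref{lm:phi}, while you correctly observe that on a refining subsequence of unsuccessful local iterations the bound follows directly from the failure of \eqref{eq:suff_decr_cond} and $\rho(\sigma_k)/\sigma_k\to 0$.
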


\begin{proof}
In fact, from Lemma \ref{lm:phi}, there exists a subset $\mathcal{K}$ such that $\lim_{k \in \mathcal{K}}\frac{f(\hat x^{\lcl}_k) - f(x^{\lcl}_{k+1})}{\sigma_k}~\le~0$. From Theorem \ref{cor:1}, one has also $\lim_{k \in \mathcal{K}}\sigma_k=0$. Now, by using Theorem \ref{liminf2}, there exists a subset $\mathcal{K}'\subseteq \mathcal{K}$ such that $\lim_{k \in \mathcal{K}'}\hat x^{\lcl}_k=x^*$. Then, since the subsequence $\{ d_k \}_{k \in \mathcal{K}}$  lies in a compact set, there must exist a subset $\mathcal{L} \subseteq \mathcal{K}'$ such that $\{ d_k \}_{k \in \mathcal{L}}$ converges to $d$ and  
\begin{eqnarray} \label{thm:cv:eq:1}
\lim_{k \in \mathcal{L}}\frac{f(\hat x^{\lcl}_k) - f(\hat x^{\lcl}_k + \sigma_k d_k)}{\sigma_k} &\le&0.
\end{eqnarray}

From the Lipschitz continuity of~$f$ near $x^*$ and using~\cite[Proposition 3.9]{CAudet_JEDennis_2006}, one deduces that the Clarke generalized derivative is continuous with respect to $d$ on the Clarke tangent cone. Hence,
$$
f^{\circ}(x^*;d)= \lim_{k\in \mathcal{L}} f^{\circ}(x^*;d_k)
$$
Additionally, one has $\hat x^{\lcl}_k + \sigma_k d_k \in \Omega$ for all $k \in \mathcal{L}$, this leads to
\begin{eqnarray} \label{thm:cv:eq:2}
f^{\circ}(x^*;d) &=&\lim_{k\in \mathcal{L}} \limsup_{\begin{array}{c}x \rightarrow x^*, x\in\Omega\\t\downarrow0,x+td_k\in\Omega\end{array}} \frac{f(x+t d_k)-f(x)}{t} \nonumber,\\ &\geq &\limsup_{k\in \mathcal{L}} \frac{f(\hat x^{\lcl}_k+\sigma_k  d_k )-f(\hat x^{\lcl}_k )}{\sigma_k}.
\end{eqnarray}
%where, from the Lipschitz continuity of~$f$ near $x^*$,
%\begin{eqnarray} \label{thm:cv:eq:3}
%r_k \; = \;
%\frac{f(\hat x^{\lcl}_k + \sigma_k d_k)-f(\hat x^{\lcl}_k+\sigma_k  d)}{\sigma_k } \; \leq \; \nu \left\|d_k - d \right\|
%\end{eqnarray}
%tends to zero on $\mathcal{L}$. 
Hence, by substituting \eqref{thm:cv:eq:1} into \eqref{thm:cv:eq:2}, one gets
\begin{eqnarray*}
f^{\circ}(x^*;d)
&\geq & \limsup_{k\in \mathcal{L}} \frac{f(\hat x^{\lcl}_k+\sigma_k d_k ) -f(\hat x^{\lcl}_k )}{\sigma_k}   \ge 0.
\end{eqnarray*}

%To prove the second part, we first conclude from the density of the refining directions on the unit sphere and the continuity of $f^\circ(x^*; \cdot)$ in $T_\Omega^H(x^*)$,that $f^\circ(x^*;d) \geq 0$ for all $d \in T_\Omega^H(x^*)$. Finally, we conclude that $f^\circ(x^*;v) = \lim_{d \in T_\Omega^H(x^*), d \rightarrow v} f^\circ(x^*;d) \geq 0$ for all $v \in T_\Omega(x^*)$.
\end{proof}
}

%The proposed algorithm converges to a Clarke stationary point under the assumption that the set of directions $\{ d_k/\|d_k\| \}_k$ is dense in the unit sphere. In practice, such assumption can be satisfied by switching to greedy search strategy for sufficiently small $\sigma_k$ instead of maximizing the local acquisition function, this would allow a maximum exploration of the local design space. Another option can be to compute, after a given large number of iterations, $x^{\lcl}_{k+1}$ using a local direct-search method with orthogonal directions to cover the surface of the unit sphere more densely~\cite{MAAbramson_et_al_2009}.

\section{Numerical Experiments}\label{sec:experiments}
The objective of this section is twofold: first, to evaluate the sensitivity of TREGO to its own parameters and perform an ablation study; second, to compare our algorithm with the original EGO and other BO alternatives to show its strengths and weaknesses. {TREGO is  available both in  the \texttt{R} package \texttt{DiceOptim} \footnote{\texttt{https://cran.r-project.org/package=DiceOptim}} and  python library \texttt{trieste} \footnote{\texttt{https://secondmind-labs.github.io/trieste/}}.}
%\subsection{Design of experiments}

\subsection{Testing procedure using the BBOB benchmark}

Our experiments are based on the COCO (COmparing Continuous Optimizers,~\cite{hansen2021coco}) software. 
COCO is a recent effort to build a testbed that allows the rigorous comparison of optimizers.
We focus here on the noiseless {Black-Box Optimization Benchmarking} (BBOB) test suite in the \textit{expensive objective function} setting~\cite{hansen2010comparing}
that contains 15 instances of 24 functions~\cite{bbob_online_functions}; each function is defined for an arbitrary number of parameters ($\geq 2$) to optimize. Each instance corresponds to a randomized modification of the original function (rotation of the coordinate system and a random translation of the optimum). The functions are divided into 5 groups: 1) separable, 2) unimodal with moderate conditioning, 3) unimodal with high conditioning, 4) multi-modal with adequate global structure, and 5) multi-modal with weak global structure. Note that group 4 is often seen as the main target for Bayesian optimization~\cite{DRJones_MSchonlau_WJWelch_1998}.
The full description of the functions is available in Appendix \ref{apendix:B} (see Table \ref{tab-bbob_functions}).

\emph{A problem} is a pair [function, target to reach]. Therefore, for each instance of a function, there are several problems to solve of difficulty varying with the target value. The \emph{Empirical Run Time Distributions} (ERTD) gives, for a given budget (i.e. number of objective function evaluations), the proportion of problems which are solved by an algorithm. This metric can be evaluated for a single function and dimension, or averaged over a set of functions (typically over one of the 5 groups or over the 24 functions).

To set the target values and more generally define a reference performance, COCO relies on a composite fake algorithm called best09. best09 is made at each optimization iteration of the best performing algorithm of the BBOB 2009~\cite{hansen2010comparing}. 
% Relevant target values can then be estimated as values reached by best09 at a given number of function evaluations: low budgets (e.g., $2 \times d$) give easy targets, larger budgets more difficult targets. 
In our experiments, the targets were set at the values reached by best09 after $[0.5,1,3,5,7,10,15,20]\times n$ function evaluations.
Note that outperforming best09 is a very challenging task, as it does not correspond to the performance of a single algorithm but of the best performing algorithm for each instance. In the following, the best09 performance is added to the plots as a reference. In addition, we added the performance of a purely random search, to serve as a lower bound.

%\subsubsection{Sensitivity analysis and ablation study}

%\subsubsection{Comparison with state-of-the-art algorithms}

\subsection{Implementation details}
For a fair comparison, TREGO, EGO and TRIKE are implemented under a unique framework, based on the \texttt{R} packages \texttt{DiceKriging} (Gaussian process models) and \texttt{DiceOptim} (BO)~\cite{picheny2014noisy,roustant2012dicekriging}. 
Our setup aligns with current practices in BO~\cite{frazier2018tutorial,shahriari2016taking}, as we detail below.

All GP models use a constant trend and an anisotropic Mat{\'e}rn covariance kernel with smoothness parameter $\nu = 5/2$. The GP hyperparameters are inferred by maximum likelihood after each addition to the training set; the likelihood is maximized using a multi-start L-BFGS scheme. 
In case of numerical instability, a small regularization value is added to the diagonal of the covariance matrix. 

Trust regions are defined using the $\ell_1$ norm, see~\eqref{eq:trbounds}, so that they are hyper-rectangles.
This allow us to optimize the expected improvement using a multi-start L-BFGS scheme. Each experiment starts with an initial set of $2n+4$ observations, generated using latin hypercube sampling improved through a maximin criterion~\cite{fang2005design}. 
All BO methods start with the same DoEs, and the DoE is different (varying the seed) for each problem instance.

For locGP, the local model uses the same kernel and mean function as the global one, but its hyperparameters are inferred independently. To avoid numerical instability, the local model is always trained on at least $2n+1$ points. If the trust-region does not contain enough points, the points closest to the center of the trust-region are also added to the training set.

%\subsection{Results}\label{sec:results}

\subsection{Sensitivity analysis and ablation study}\label{sec:sensitivity}

TREGO depends on a number of parameters (see Section \ref{sec:TREGO}) and has some additional degrees of freedom worth exploring (see Section \ref{sec:extensions}). The objective of these experiments is to answer the following questions:
\begin{enumerate}
 \item is TREGO sensitive to the initial size of the trust region?
 \item is TREGO sensitive to the contraction factor $\beta$ (see Eq. \ref{eq:decrease_cond}) of the trust region?
 \item is using a local model beneficial?
 \item is there an optimal ratio  {of} global and local steps?
\end{enumerate}

To answer these questions, we run a default version of TREGO and 9 variants, as reported in Table \ref{tab-algoNames}. The contraction parameter $\beta$ is either $0.9$ (which is classical in DFO algorithms) or $0.5$ (which corresponds to an aggressive reduction of the trust region). %The default initial trust-region size of the trust region corresponds to 20\% of the volume of the search space. 
{ The choice of the initial trust-region $\sigma_0$, within the default TREGO, corresponds to setting the initial trust-region volume to $20\%$ of the search space. In this case, the initial trust-region volume is given by $(2\sigma_0)^{n}$.} We test also as alternatives with a small initial trust-region (i.e., 10\% of the search space) and a larger one (i.e., 40\% of the search space). The global-local ratio varies from 10-1 (which is expected to behave almost similarly to the original EGO) to 1-10 (very local).

\begin{table}

\begin{tabular}{|p{0.2\linewidth}p{0.7\linewidth}|}
\hline
Acronym & Solvers \\
\hline  \hline
random & random search \\
best09 & best of all BBOB 2009 competitors at each budget
\cite{auger:inria-00471251} \\
TRIKE & TRIKE algorithm of~\cite{Trike_2016}\\
SMAC & SMAC algorithm of~\cite{SMAC_2011} \\
DTS-CMA & DTS-CMA algorithm of~\cite{DTS_CMA_2019} \\
EGO & original EGO algorithm of~\cite{DRJones_MSchonlau_WJWelch_1998} \\
\hline \hline
TREGO & default TREGO with $\beta=0.9$, { $\gamma=1/\beta$, $\sigma_0=\frac{1}{2}(1/5)^{1/n}$, $\rho(\sigma)=\sigma^2$, $d_{\max}=1$, $d_{\min}=10^{-6}$ }, global/local ratio = 1 / 1 (i.e., $G=1$ and $L=1$), with no local GP model\\
gl1-10, gl1-4, gl4-1 and gl10-1 & TREGO with a global/local ratio of 1/10, 1/4, 4/1 and 10/1, respectively\\
smV0 and lgV0 &  TREGO with small (i.e., $\sigma_0=\frac{1}{2}(1/10)^{1/n}$) and large  (i.e., $\sigma_0~=~\frac{1}{2}(2/5)^{1/n}$) initial trust-region size\\
fstC & TREGO with fast contraction of the trust-region, i.e., $\beta=0.5$\\
fstCsmV0 &  TREGO with fast contraction of the trust-region and small $\sigma_0$\\
locGP &  TREGO with a local GP model\\
\hline
\end{tabular}
\caption{Names of the compared algorithms. For the TREGO variants, when not specified, the parameter values are the ones of the default, TREGO.   \label{tab-algoNames}}
\end{table}

Because of the cost of a full COCO benchmark with EGO-like algorithms, the interaction between these parameters is not studied. 
Also, the ablation experiments are limited to the problems with dimensions 2 and 5 and relatively short runs ($30n$ function evaluations).
With these settings and 15 repetitions of each optimization run, an EGO algorithm is tested within a couple of days of computing time on a recent single processor.

Figure~\ref{fig-summarymerged}, top row, 
summarizes our study on the effect of the global versus local iterations ratio. There is measurable advantage of algorithms devoting more iterations to local rather than global search. gl1-4 and gl1-10 consistently outperform gl4-1 and gl10-1. gl1-4 and gl1-10 slightly outperform the TREGO baseline,
the effect being more visible with higher dimension (see also Figure~\ref{fig-summaryCompareLong} for results with 10 dimensions).

By further splitting results into function groups (see Figure~\ref{fig-CV0locGP_by_groups} in Appendix), it is observed that the performance gain due to having more local iterations happens on the unimodal function groups (the 2nd and 3rd, i.e., unimodal functions with low and high conditioning) when less difference can be observed on multimodal functions (first, fourth and fifth group). For multimodal functions with a weak global structure (fifth group, bottom right plot of Figure~\ref{fig-CV0locGP_by_groups}), gl10-1 is even on average (over the budgets) the best strategy. These findings are intuitive, as unimodal function may not benefit at all from global steps,
while on the other hand a too aggressively local strategy (e.g. gl1-10) may get trapped in a local optimum of a highly multimodal function.
Overall on this benchmark, gl1-4 offers the best trade-off over all groups between performance and robustness. 

\begin{figure}[!ht]
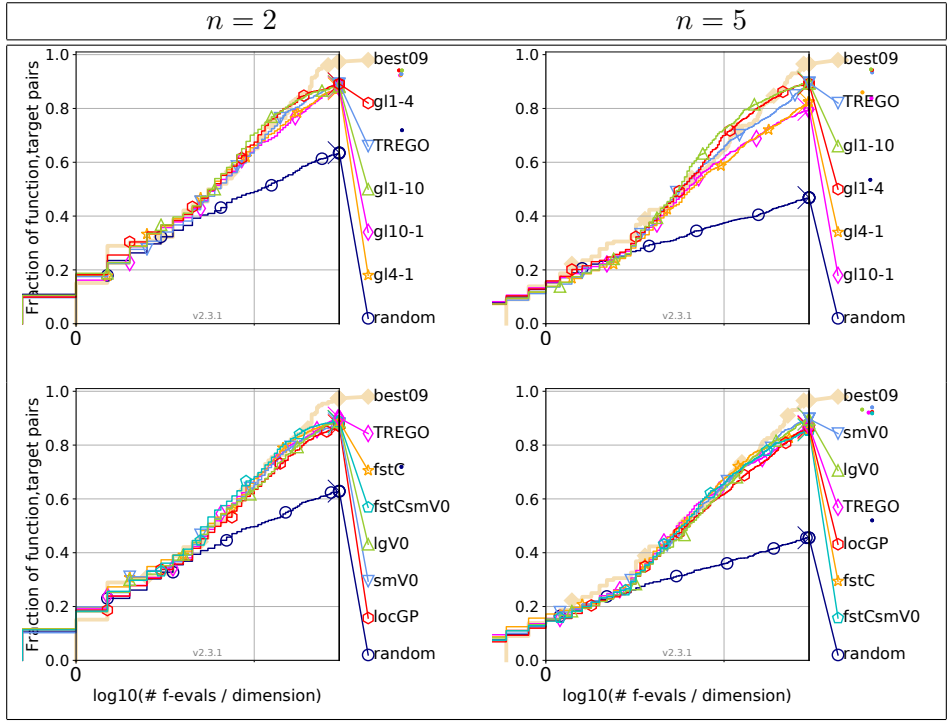

\centering
\begin{tabular}{|cc|}
\hline
$n=2$ & $n=5$ \\
\hline \hline
\includegraphics[width=0.36\textwidth]{ERTD_trust_GL_02D}&
\includegraphics[width=0.36\textwidth]{ERTD_trust_GL_05D}\\
\includegraphics[width=0.36\textwidth]{ERTD_trust_CV0locGP_02D}&
\includegraphics[width=0.36\textwidth]{ERTD_trust_CV0locGP_05D}\\
\hline
\end{tabular}
\caption{Effect of changing the amount of local and global iterations (top), and changing the other parameters of the TREGO algorithm (bottom). Performance is reported in terms of ERTD, averaged over the entire noiseless BBOB testbed in 2 (left) and 5 (right) dimensions. Run length is $30\times n$. 
% TREGO has 1 local iteration for 1 global. gl$i$-$j$ has $i$ global for $j$ local iterations.
% smV0 and lgV0 have a small and a large initial volume of trust region, respectively.
% fstC has a fast contracting trust region. locGP builds a local GP.
\label{fig-summarymerged}
}
\end{figure}

Figure~\ref{fig-summarymerged}, bottom row,
shows the average performance of other variants of TREGO. 
Overall, TREGO has very little sensitivity to its internal parameters, the average performances of all TREGO variants being similar in both dimensions. 
The robustness of TREGO performance with respect to the other parameters is an advantage of the method, and is in line with what is generally observed for trust region based algorithms.

The effects of the TREGO parameters are studied by function groups in Figure~\ref{fig-CV0locGP_by_groups} (see Appendix). 
The main visible results are:
\begin{itemize}
 \item a slightly positive effect of the local GP (locGP) on the groups 1 and 2 but a strong negative effect on unimodal functions with bad conditioning (group 3), and no effect on the remaining groups. Despite offering attractive flexibility in theory, the local GP provides in practice either limited gain or has a negative impact on performance. As this variant is also more complicated than TREGO, it may be discarded. 
 \item  a positive effect of fast contraction of the trust region (fstC and fstCsmV0) on highly multimodal functions (group 5) during early iterations. By making the trust region more local earlier in the search, the fast contraction allows to reach the easy targets, but this early performance prevents the algorithm from finding other better targets later on (those variants being outperformed by others at the end of the runs).
\end{itemize}
{
The gl1-4 variant of TREGO is shown to offer the best trade-off over all groups between performance and robustness. In our comparison with the state-of-the-art BBO algorithms, we will use the name TREGO to refer to the gl1-4 solver.}
% The fast contraction of the trust region (fstC and fstCsmV0) has a positive effect on highly multimodal functions (group 5) during early iterations.
% % By making the trust region more local earlier in the search, the fast contraction allows to reach the easy targets, but this early performance prevents the algorithm from finding other better targets later on (those variants being outperformed by others at the end of the runs). 

\subsection{Comparison with state-of-the-art BBO algorithms}

Longer runs of length $50n$ (function evaluations) are made with TREGO in dimensions 2, 5 and 10. The results are compared to state-of-the-art Bayesian optimization algorithms: a vanilla EGO, that serves as a baseline, TRIKE (see Section \ref{sec:relatedwork}), SMAC, {  DTS-CMA, Nomad and MCS.} 
A COCO test campaign of such a set of algorithms up to dimension 10, with run length of $50n$ and 15 repetitions of the optimizations takes of the order of 3 weeks of computing time on a recent single processor.
%\vic{we need to indicate computing time as well}\rodo{done}

DTS-CMA~\cite{DTS_CMA_2019} is a surrogate-assisted evolution strategy based on a combination of the CMA-ES algorithm and Gaussian process surrogates. % Typically, an expected improvement criterion is used to rank the points evaluated by the surrogates.  
The DTS-CMA solver is known to be very competitive compared to the state-of-the-art black-box optimization solvers particularly on some classes of multimodal test problems. 
SMAC~\cite{SMAC_2011} (in its BBOB version) is a BO solver that uses an isotropic GP to model the objective function and a stochastic local search to optimize the expected improvement. SMAC is known to perform very well early in the search compared to the state-of-the-art black-box optimizers. { Nomad~\cite{NOMAD2021,NOMAD2011} is a \texttt{C++} solver based on the mesh adaptive direct search method~\cite{CAudet_JEDennis_2006}. 
We have tested Nomad \texttt{version 4.2.0} via its provided Python interface where the variable neighborhood search (VNS) strategy was enabled to enhance its global exploration. 
Nomad enjoys similar convergence properties to those of TREGO, hence a comparison between the two solvers is meaningful. 
MCS~\cite{MCS_huyer} is a multilevel coordinate search solver that balances global and local search (the latter using
quadratic interpolation). MCS is among the best DFO solvers on bound constrained optimization problems~\cite{LRios_NSahinidis_2013}.

DTS-CMA, SMAC and MCS results are directly extracted from the COCO database. This is not the case of Nomad and TRIKE.} As TRIKE follows a relatively standard BO framework, we use our own implementation to compare TREGO against it. As TURBO has a complex structure and the available code is too computationally demanding to be used directly with COCO, it is left out of this study. Figure~\ref{fig-summaryCompareLong} gives the average performance of the algorithms on all the functions of the testbed.
Results in 5 and 10 dimensions split by function groups are provided in Figure~\ref{fig-CompareMultiLong}.

\begin{figure}
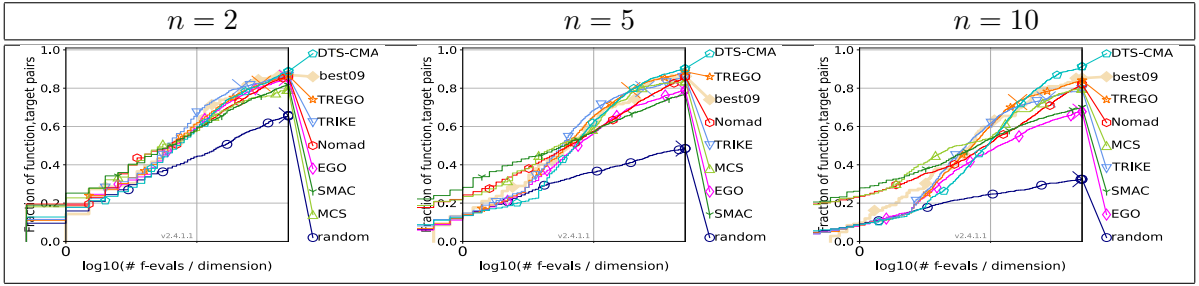

\centering
\begin{tabular}{|ccc|}
\hline
$n=2$ & $n=5$ & $n=10$ \\
\hline \hline
\includegraphics[trim=0mm 0mm 0mm 0mm, clip,width=0.3\textwidth,height=3cm]{pprldmany_02D_noiselessall}&
\includegraphics[trim=0mm 0mm 0mm 0mm, clip, width=0.3\textwidth,height=3cm]{pprldmany_05D_noiselessall}&
\includegraphics[trim=0mm 0mm 0mm 0mm, clip, width=0.3\textwidth,height=3cm]{pprldmany_10D_noiselessall} \\
\hline
\end{tabular}
\caption{Comparison of TREGO with state-of-the-art optimization algorithms, averaged over the entire COCO testbed in 2, 5 and 10 dimensions. Run length = $50\times n$.
\label{fig-summaryCompareLong}}
\end{figure}

\begin{figure}
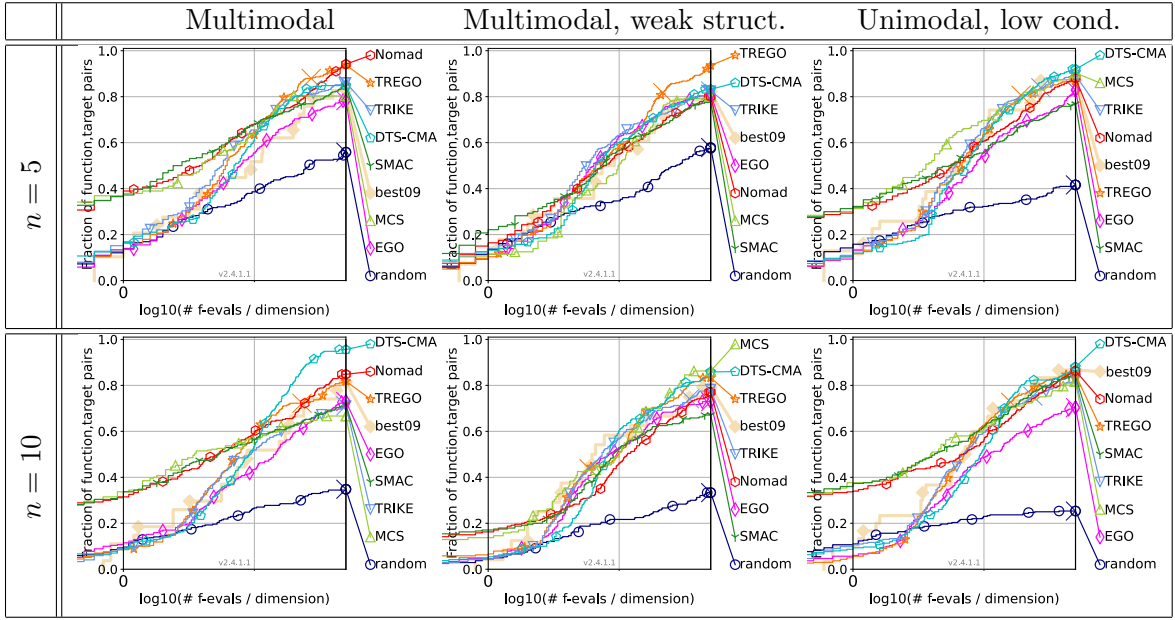

%     \centering
\begin{tabular}{|c||c@{}c@{}c@{}|}
\hline 
& {Multimodal} & {Multimodal, weak struct.} & {Unimodal, low cond.} \\
\hline \hline
\rotatebox[origin=l]{90}{\text{~~~~~~~~$n=5$}}& 
\includegraphics[trim=0mm 0mm 0mm 0mm, clip, width=0.3\textwidth]{pprldmany_05D_multi} &
\includegraphics[trim=0mm 0mm 0mm 0mm, clip, width=0.3\textwidth]{pprldmany_05D_mult2}&
\includegraphics[trim=0mm 0mm 0mm 0mm, clip, width=0.3\textwidth]{pprldmany_05D_lcond} \\ \hline \hline
\rotatebox[origin=l]{90}{\text{~~~~~~~~$n=10$}}& 
\includegraphics[trim=0mm 0mm 0mm 0mm, clip, width=0.3\textwidth]{pprldmany_10D_multi}&
\includegraphics[trim=0mm 0mm 0mm 0mm, clip, width=0.3\textwidth]{pprldmany_10D_mult2}&
\includegraphics[trim=0mm 0mm 0mm 0mm, clip, width=0.3\textwidth]{pprldmany_10D_lcond} \\ \hline

\end{tabular}
\caption{Comparison of TREGO with state-of-the-art optimization algorithms, averaged over the multi-modal functions with adequate (left, f15 to f19) and weak (middle, f20 to f24) global structure, unimodal functions with low conditioning (right), $n=5$ (top row) and $n=10$ (bottom row) dimensions. Run length = $50\times n$.
Results for the other groups are given in Appendix, Figure~\ref{fig-CompareMultiLong2}.
\label{fig-CompareMultiLong}}
\end{figure}

\paragraph{EGO} is significantly outperformed by both trust regions algorithms (i.e., TREGO and TRIKE). 
This performance gap is limited for $n=2$ but very visible for $n=5$ and even higher for $n=10$. It is also significant for any budget (as soon as the shared initialization is done). The improvement is also visible for all function groups (see Figure~\ref{fig-CompareMultiLong}), in particular for groups with strong structure. For the multimodal with weak structure group, the effect is mostly visible for the larger budgets.

{
\paragraph{Nomad} has an overall performance comparable to TREGO. Nomad is shown in particular to be very efficient for small budgets ($< 20 \times n$) but then it gets outperformed by TREGO as the evaluation budget gets larger. The performance gap between Nomad and TREGO is limited for $n=2$ but very visible for $n=5$ and even higher for $n=10$, see Figure~\ref{fig-summaryCompareLong}. The good start of Nomad can be explained by the fact that it requires only one point to start the optimization process, while Bayesian optimization solvers need a set of points (i.e., the initial DoE) to initiate the optimization process. Thanks to its variable neighborhood search strategy, Nomad seems to outperform most of the tested solvers on the group of multimodal optimization problems, see Figure~\ref{fig-CompareMultiLong}.

\paragraph{MCS} is outperformed by most of the tested solvers despite its very good performance at the early stages of the optimization process. In fact, the performance of MCS at the beginning seems to deteriorate very fast as the the budget is getting larger, particularly when the regarded optimization problems are multimodal, see Figure~\ref{fig-CompareMultiLong}.
}

\paragraph{SMAC} has an early start and is visibly able to start optimizing while all other methods are still creating their initial DoE.
However, it is outperformed by all trust region variants before the number of evaluations reaches 10 times the problem dimension (vertical line on the graphs).
This effect also increases with dimension. 

\paragraph{DTS-CMA} has conversely a slower start, 
so that it is slightly outperformed by trust regions for small budgets ($< 20 \times n$). However, for large budgets and $n=10$, 
DTS-CMA largely outperforms other methods on average. However, looking at Figure~\ref{fig-CompareMultiLong}, DTS-CMA clearly outperforms the other methods (including the best09 baseline) on multimodal functions with strong structure for $n=10$ and large budgets, while TREGO remains competitive in other cases.
% comes in particular from its better behavior on unimodal functions with high conditioning when compared to all BO methods (cf. Figure~\ref{fig-CompareMultiLong}, right), while on multi-modal functions which is the target for BO algorithms,
% gl1-4 and TREGO perform best (see Figure~\ref{fig-CompareMultiLong}, left and middle).

\paragraph{TRIKE} has an overall performance comparable to TREGO. 
For $n=5$, it slightly outperforms the other methods for intermediate budget values, but looses its advantage for larger budgets. 
Figure~\ref{fig-CompareMultiLong2} (see Appendix) reveals that this advantage is mainly achieved on the unimodal group with high conditioning, but
 on multi-modal problems, TREGO's ability to perform global steps offer a substantial advantage.

\paragraph{Overall performance} Overall, this benchmark does not reveal a universal winner. {SMAC, Nomad and MCS excel with extremely limited budgets}, while DTS-CMA outperforms the other methods for the largest dimensions and budgets. TREGO is overall very competitive on intermediate values, in particular for multi-modal functions.

\paragraph{Discussion} It appears clearly from our experiments that trust regions are an efficient way to improve EGO's scalability with dimension. EGO is known to over-explore the boundaries in high dimension~\cite{oh2018bock,Turbo_2019}, and narrowing the search space to the vicinity of the current best point naturally solves this issue. {Thus}, since EGO is outperformed for any budget, we can conclude that the gain obtained by focusing early on local optima is not lost later by missing the global optimum region.
Trust regions also improve performance of EGO on problems for which GPs are not the most natural fit (i.e. unimodal functions). 
For this class of problems, the most aggressively local algorithm (TRIKE) can perform best in some cases (Figure~\ref{fig-CompareMultiLong2}), however our more balanced approach is almost as good, if better (Figure~\ref{fig-CompareMultiLong2}, unimodal functions with low conditioning).
On the other hand, maintaining a global search throughout the optimization run allows escaping local optima and ultimately delivering better performance for larger budgets (see in particular Figure~\ref{fig-CompareMultiLong}, all multimodal functions). 
%
%\begin{figure}
%\centering
%\subfigure[$d=2$]{ \includegraphics[width=0.3\textwidth]{pprldmany_02D_noiselessall.pdf}}
%\subfigure[$d=5$]{ \includegraphics[width=0.3\textwidth]{pprldmany_05D_noiselessall.pdf}}
%\subfigure[$d=10$]{ \includegraphics[width=0.3\textwidth]{pprldmany_10D_noiselessall.pdf}}
%\caption{Comparison of the TREGO and gl1-4 with state-of-the-art optimization algorithms, averaged on the entire COCO testbed in 2, 5 and 10 dimensions. Run length = $50\times d$.
%\label{fig-summaryCompareLong}}
%\end{figure}

% \begin{figure}
% \centering
% \subfigure[f15 function, $d=5$.]{ \includegraphics[width=0.32\textwidth]{ERTD_trust_compare_f15_05D.pdf}}
% \subfigure[f17 function, $d=5$.]{ \includegraphics[width=0.32\textwidth]{ERTD_trust_compare_f17_05D.pdf}}
% \subfigure[f23 function, $d=10$.]{ \includegraphics[width=0.32\textwidth]{ERTD_trust_compare_f23_10D.pdf}}
% %\subfigure[f8 function.]{ \includegraphics[width=0.48\textwidth]{ERTD_trust_compare_f8_05D.pdf}}
% %\subfigure[f23 function.]{ \includegraphics[width=0.48\textwidth]{ERTD_trust_compare_f23_05D.pdf}}
% \caption{Comparison of TREGO and gl1-4 with state-of-the-art optimization algorithms on three functions, run length = $50\times d$.
% \label{fig-CompareFunctions}}
% \end{figure}

%\newpage
\section{Conclusions and perspectives}
\label{sec:conc}
In this work, {the TREGO method is introduced: a Bayesian optimization algorithm based on a trust-region mechanism for the optimization of expensive-to-evaluate black-box functions.}
TREGO builds on the celebrated EGO algorithm by alternating between a standard global step and a local step during which the search is limited to a trust region.
{Equipped with such a local step, TREGO rigorously achieves} global convergence, while enjoying the flexible predictors and efficient exploration-exploitation trade-off provided by the GPs.
{An extensive benchmark is then performed, which allowed us to form the following conclusions}:
\begin{itemize}
 \item TREGO benefits from having a relatively high proportion of local steps, but is otherwise {almost} insensitive to its other parameters.
 \item A more complex approach involving both a local and a global model, which is possible in the TREGO framework, does not provide any benefit.
 \item TREGO significantly outperforms EGO in all tested situations.
 \item TREGO is a highly competitive algorithm for multi-modal functions with moderate dimensions and budgets. 
\end{itemize}
Making TREGO a potential overall winner on the experiments reported here is an avenue for future work.
This would require improving its performance on unimodal functions with high conditioning,
and improving its performance at very early steps, for example by leveraging SMAC for creating the initial DoEs. 
{ Our empirical evaluation focused on bound constrained BBO problems. 
However, TREGO readily applies to the case of explicit, non-relaxable constraints, which may be studied in the future.
Moreover, inspired by e.g.~\cite{CAudet_JEDennis_2009,YDiouane_2021,Gratton_Vicente_2014} from the DFO community and ~\cite{picheny2016bayesian,schonlau1998global} from the BO one, TREGO can also be naturally extended to handle constraints that are allowed to be violated during the optimization process.} Another important future work may include the extension of TREGO to the case of noisy observations, following recent 
results in DFO~\cite{anagnostidis2021direct,Sto_Mads_2019,Sto_trust_region_2018,diouane2022ES} and established BO techniques~\cite{picheny2013benchmark}.

%\newpage
\small
\bibliographystyle{plain}

\newpage
\appendix
\section{Pseudo-code of the TREGO algorithm}
\label{apendix:A}
\LinesNumberedHidden

\begin{algorithm}[!ht]
\footnotesize
	\SetAlgoNlRelativeSize{.1}
	\caption{\bf \bf A Trust-Region framework for EGO (TREGO).}
	\label{alg:TREGO}
	\SetAlgoLined
	\KwData{Create an initial \doe ~{$\mathcal{D}_{t_0}=\{x_1, x_2, \ldots,x_{t_0}\}$} of $t_0$ points in a given set $\Omega \subset \real^n$ with a given method. Set {$\mathcal{Y}_{t_0} = \{ f (x_1), f (x_2), \ldots, f(x_{t_0}) \}$}. Choose $G\ge 0$ the number of the global steps and $L\ge 1$ the number of the local steps. Initialize the step-size parameter $\sigma_0$, $x^*_0 \in \mathcal{D}_{t_0}$, choose {the constants $\beta$, $\gamma$, $d_{\min}$ and $d_{\max}$ such that $0< \beta < 1<\gamma$}  and $0 < d_{\min} < d_{\max}$. Select a forcing function $\rho(.)$ and set $k=0$ and $t=t_0$\;}
	
	\While{some stopping criterion is not satisfied}{
		%\textit{\large }
		%\textbf{Step 1 (acquisition function update):} 
		%Using $X_k$ and $Y_k$, fit the model and compute the acquisition function function $\alpha(.; \mathcal{D}_t)$\;
		\tcc{\large A global phase over $\Omega$:}	
		\For{$i=1,\ldots,G$}{
		\textbf{Step 1 (global acquisition function maximization):} 
		
				Set
				\begin{equation*} x^{\glb}_{t} := \underset{x \in \Omega}{\argmax} ~ \alpha(x; \mathcal{D}_t); \end{equation*}
		
		\textbf{Step 2 (update the \doe):}  
		Set $\mathcal{D}_{t+1} = \mathcal{D}_{t} \cup \left\lbrace x^{\glb}_{t} \right\rbrace$ and $\mathcal{Y}_{t+1} = \mathcal{Y}_t \cup \left\lbrace f \left( x^{\glb}_{t} \right) \right\rbrace$\;
		Increment $t$\; % \vic{missing update t and Dt}\;
		}
		Let $x^{\glb}_{k+1}$ be the best point (in term of $f$) in the \doe ~$\mathcal{D}_{t}$\; %Evaluate $f (x^{\glb}_{k+1})$\;	
		\textbf{Step 3 (imposing sufficient decrease globally):} 	
		
		\eIf{$f(x_{k+1}^{\glb}) \leq f(x^*_k ) - \rho (\sigma_k)$}{the global phase is successful, set  $x^*_{k+1} = x^{\glb}_{k+1}$ and $\sigma_{k+1}= \gamma \sigma_k$\;  }{
			\tcc{\large A local phase over {the trust-region} $\Omega_k$:}
			\For{$i=1,\ldots,L$}{
			\textbf{Step 4 (local acquisition function maximization):} 
			
					Set
		%$$x^{\lcl}_{k+1} = x^*_k + d_k, \mbox{where}~~ d_k = \underset{\| d \| \le \sigma_k}{\argmax} ~ \alpha_k(x^*_k+ d).$$
				\begin{equation*}
		x^{\lcl}_{t} :=\underset{x \in \Omega_k}{\argmax} ~ \alpha(x; \mathcal{D}_t),
		\end{equation*}
		where  {$\Omega_k$ is the trust-region given by} $\Omega_k = \{x \in \Omega\; | \;  d_{\min} \sigma_k \le \| x - x^*_k \| \le d_{\max} \sigma_k\}$\;
%where $d_k$ is given by 
%		\begin{equation*}
%		d_k = \underset{\| d \| \le \sigma_k}{\argmax} ~ \_t_k(x^*_k+ d).
%		\end{equation*}
		
		\textbf{Step 5 (update the \doe):} 
	        Set $\mathcal{D}_{t+1} = \mathcal{D}_{t} \cup \left\lbrace x^{\lcl}_{t} \right\rbrace$ and $\mathcal{Y}_{t+1} = \mathcal{Y}_t \cup \left\lbrace f \left( x^{\lcl}_{t} \right) \right\rbrace$\;
		Increment $t$\; % \vic{missing update t and Dt}\;
		}
		Let $x^{\lcl}_{k+1}$ be the best point (in term of $f$) in the \doe ~$\mathcal{D}_{t}$\; %Evaluate $f (x^{\glb}_{k+1})$\;
		\textbf{Step 6 (imposing sufficient decrease locally):} 
		
		\eIf{$f(x^{\lcl}_{k+1}) \leq f(x^*_k ) - \rho (\sigma_k)$}{the local phase and iteration are successful, set  $x^*_{k+1} = x^{\lcl}_{k+1}$ and {$\sigma_{k+1}= \gamma \sigma_k$} \;}{the local phase and iteration are not successful, set $x^*_{k+1}=x^*_{k}$, and $\sigma_{k+1} = {\beta}  \sigma_k$\;}}
		  
		  Increment $k$\;
	}
\end{algorithm}

\newpage
\section{Functions of the BBOB noiseless testbed}
\label{apendix:B}
\begin{table}[!ht]
\footnotesize
\begin{tabular}{|c|p{0.3\textwidth}p{0.65\textwidth}|}
\hline
ID & name & comments \\
\cline{1-3}
\multicolumn{3}{c}{separable functions}\\
\cline{1-3}
f1 & Sphere & unimodal, allows to checks numerical accuracy at convergence \\
f2 & Ellipsoidal & unimodal, conditioning $\approx 10^6$ \\
f3 & Rastrigin & $10^n$ local minima, spherical global structure \\
f4 & B\"uche-Rastrigin & $10^n$ local minima, asymmetric global structure \\
f5 & Linear Slope & linear, solution on the domain boundary \\
\cline{1-3}
\multicolumn{3}{c}{functions with low or moderate conditioning}\\
\cline{1-3}
f6 & Attractive Sector & unimodal, highly asymmetric \\
f7 & Step Ellipsoidal & unimodal, conditioning $\approx 100$, made of many plateaus \\
f8 & Original Rosenbrock & good points form a curved $n-1$ dimensional valley \\
f9 & Rotated Rosenbrock & rotated f8 \\
\cline{1-3}
\multicolumn{3}{c}{unimodal functions with high conditioning $\approx 10^6$ }\\
\cline{1-3}
f10 & Ellipsoidal & rotated f2 \\
f11 & Discus & a direction is 1000 times more sensitive than the others \\
f12 & Bent Cigar & non-quadratic optimal valley \\
f13 & Sharp Ridge & resembles f12 with a non-differentiable bottom of valley \\
f14 & Different Powers & different sensitivities w.r.t. the $x_i$'s near the optimum \\
\cline{1-3}
\multicolumn{3}{c}{multimodal functions with adequate global structure}\\
\cline{1-3}
f15 & Rastrigin & rotated and asymmetric f3 \\
f16 & Weierstrass & highly rugged and moderately repetitive landscape, non unique optimum \\
f17 & Schaffers F7 & highly multimodal with spatial variation of frequency and amplitude, smoother and more repetitive than f16 \\
f18 & moderately ill-conditioned Schaffers F7 & f17 with conditioning $\approx 1000$ \\
f19 & Composite Griewank-Rosenbrock & highly multimodal version of Rosenbrock \\
\cline{1-3}
\multicolumn{3}{c}{multimodal functions with weak global structure}\\
\cline{1-3}
f20  & Schwefel & $2^n$ most prominent optima close to the corners of a shrinked and rotated rectangle \\
f21 & Gallagher's Gaussian 101-me peaks & 101 optima with random positions and heights, conditioning $\approx 30$ \\
f22 & Gallagher's Gaussian 21-hi peaks & 21 optima with random positions and heights, conditioning $\approx 1000$ \\
f23 & Katsuura & highly rugged and repetitive function with more than $10^n$ global optima \\
f24 & Lunacek bi-Rastrigin & highly multimodal function with 2 funnels, one leading to a local optimum and covering about 70\% of the search space \\
\hline
\end{tabular}
\caption{
Functions of the BBOB noiseless testbed, divided in groups.
\label{tab-bbob_functions}
}
\end{table}

\newpage
\section{Complementary experimental results}
\begin{figure}[!ht]
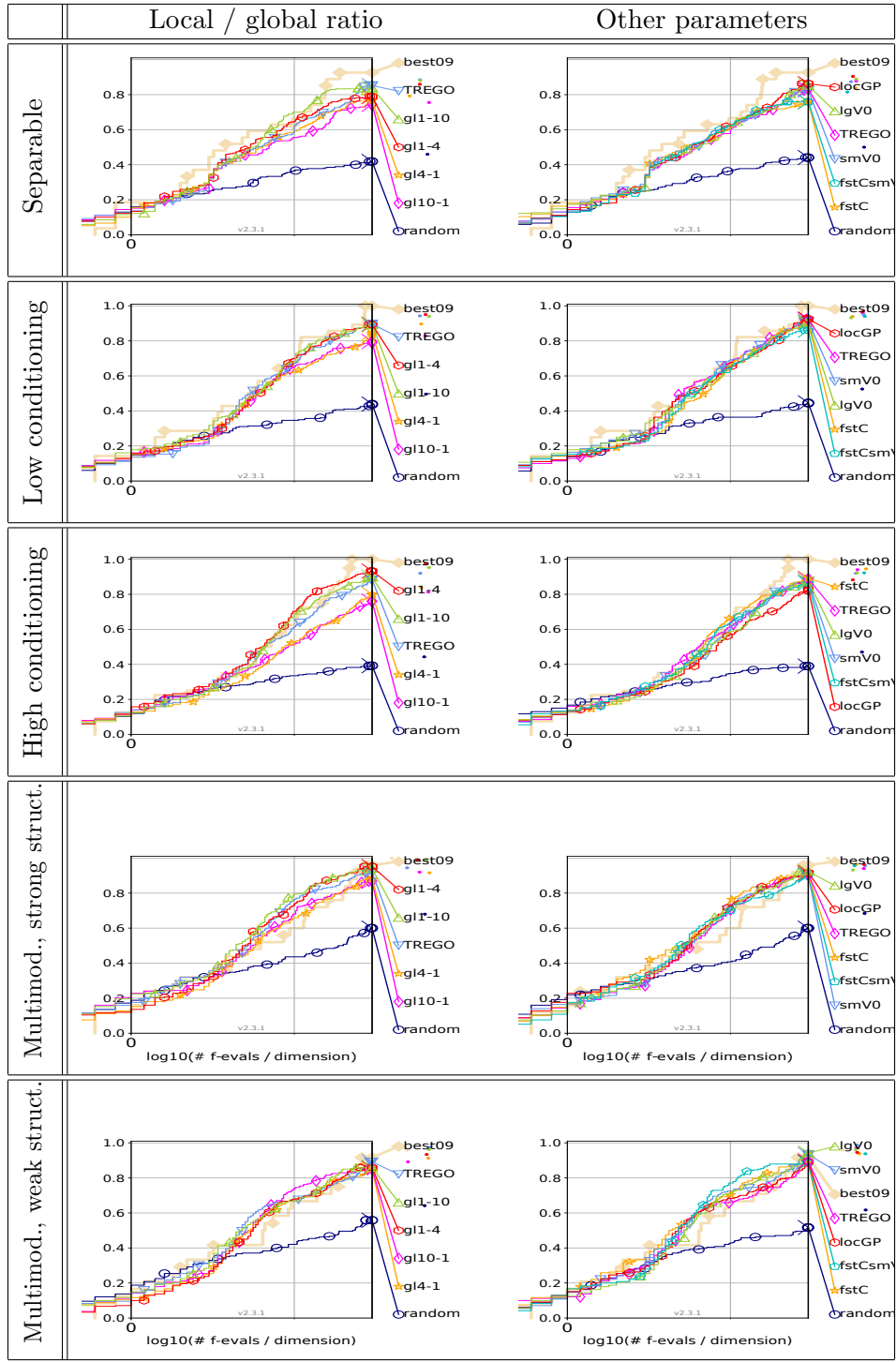

\centering
\begin{tabular}{|c||c@{}ccc@{}|}
\hline
& \text{Local / global ratio } & & &\text{Other parameters }\\
\hline \hline
\rotatebox[origin=l]{90}{\text{~~~~~Separable~~~~~~}}& 
\includegraphics[trim=0mm 0mm 10mm 0mm, clip, width=0.33\textwidth,height=3cm]{ERTD_trust_GL_05D_separ} & & &
\includegraphics[trim=0mm 0mm 10mm 0mm, clip, width=0.33\textwidth,height=3cm]{ERTD_trust_CV0locGP_05D_separ} \\\hline\hline
\rotatebox[origin=l]{90}{\text{Low conditioning~~}}& 
\includegraphics[trim=0mm 0mm 10mm 0mm, clip, width=0.33\textwidth,height=3cm]{ERTD_trust_GL_05D_lcond}& & &
\includegraphics[trim=0mm 0mm 10mm 0mm, clip, width=0.33\textwidth,height=3cm]{ERTD_trust_CV0locGP_05D_lcond}\\\hline\hline
\rotatebox[origin=l]{90}{\text{High conditioning~~}}& 
\includegraphics[trim=0mm 0mm 10mm 0mm, clip, width=0.33\textwidth,height=3cm]{ERTD_trust_GL_05D_hcond} & & &
\includegraphics[trim=0mm 0mm 10mm 0mm, clip, width=0.33\textwidth,height=3cm]{ERTD_trust_CV0locGP_05D_hcond} \\\hline\hline
\rotatebox[origin=l]{90}{\text{\small{Multimod., strong struct.}}}& 
\includegraphics[trim=0mm 0mm 10mm 0mm, clip, width=0.33\textwidth,height=3cm]{ERTD_trust_GL_05D_multi}& & &
\includegraphics[trim=0mm 0mm 10mm 0mm, clip, width=0.33\textwidth,height=3cm]{ERTD_trust_CV0locGP_05D_multi}\\\hline\hline
\rotatebox[origin=l]{90}{\text{\small{Multimod., weak struct.}}}& 
\includegraphics[trim=0mm 0mm 10mm 0mm, clip, width=0.33\textwidth,height=3cm]{ERTD_trust_GL_05D_mult2} & & &
\includegraphics[trim=0mm 0mm 10mm 0mm, clip, width=0.33\textwidth,height=3cm]{ERTD_trust_CV0locGP_05D_mult2}\\
\hline
\end{tabular}
\caption{
Effect of changing parameters of the TREGO algorithm, averaged by function groups for $n=5$. Run length is $30\times n$.
\label{fig-CV0locGP_by_groups}
}
\end{figure}

\begin{figure}
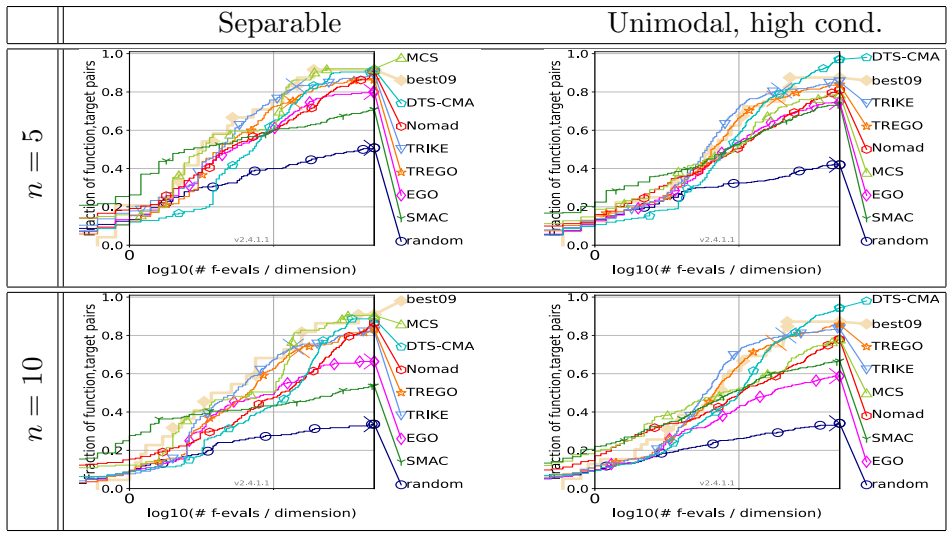

\centering
\begin{tabular}{|c||c@{}ccc@{}|}
\hline
& {Separable} && & {Unimodal, high cond.} \\ \hline \hline
\rotatebox[origin=l]{90}{\text{~~~~~~~$n=5$}}& 
\includegraphics[trim=0mm 0mm 0mm 0mm, clip,  width=0.33\textwidth,height=3cm]{pprldmany_05D_separ} & & &
\includegraphics[trim=0mm 0mm 0mm 0mm, clip,  width=0.33\textwidth,height=3cm]{pprldmany_05D_hcond} \\ \hline \hline
\rotatebox[origin=l]{90}{\text{~~~~~~~$n=10$}}& 
\includegraphics[trim=0mm 0mm 0mm 0mm, clip,  width=0.33\textwidth,height=3cm]{pprldmany_10D_separ}& & &
\includegraphics[trim=0mm 0mm 0mm 0mm, clip,  width=0.33\textwidth,height=3cm]{pprldmany_10D_hcond}\\
\hline
\end{tabular}
\caption{Comparison of TREGO with state-of-the-art optimization algorithms on separable (left) and unimodal with high conditioning functions (right), for $n=5$ (top) and $n=10$ (bottom). Run length = $50\times n$.
\label{fig-CompareMultiLong2}}
\end{figure}

\end{document}